

\documentclass[3p,times]{elsarticle}






\usepackage{amsmath} 
\usepackage{amssymb}

\usepackage{amsthm}
\usepackage{amsfonts}
\usepackage{graphicx}
\usepackage{subfigure}
\usepackage{subfigmat}
\usepackage{framed}
\usepackage[usenames,dvipsnames]{color}
\usepackage{natbib}

\usepackage{algorithm, algorithmic}
\numberwithin{algorithm}{section}
\usepackage{enumerate}
\usepackage{url}

\theoremstyle{plain}
\newtheorem{lem}{Lemma}
\newtheorem{thm}{Theorem}
\newtheorem{prop}{Proposition}
\newtheorem{cor}{Corollary}
\theoremstyle{definition}

\newtheorem{assume}{Assumption}

\newcommand{\R}{\mathbb{R}}

\newcommand{\eps}{\epsilon}
\newcommand{\ve}{\varepsilon}

\newcommand{\m}{\mathcal}

\newif\iflecture
\lecturefalse

\begin{document}

\begin{frontmatter}

\title{Distributionally Robust Bootstrap Optimization}
\author[label1]{Tyler Summers
}
\author[label2]{\quad \quad Maryam Kamgarpour}

\affiliation[label1]{organization={Control, Optimization, and Networks Lab, University of Texas at Dallas}}
\affiliation[label2]{organization={SYCAMORE Lab, Ecole Polytechnique Federale de Lausanne (EPFL)}}%


\begin{abstract}
Control architectures and autonomy stacks for complex engineering systems are often divided into layers to decompose a complex problem and solution into distinct, manageable sub-problems. To simplify designs, uncertainties are often ignored across layers, an approach with deep roots in classical notions of separation and certainty equivalence. But to develop robust architectures, especially as interactions between data-driven learning layers and model-based decision-making layers grow more intricate, more sophisticated interfaces between layers are required. We propose a basic architecture that couples a statistical parameter estimation layer with a constrained optimization layer. We show how the layers can be tightly integrated by combining bootstrap resampling with distributionally robust optimization. The approach allows a finite-data out-of-sample safety guarantee and an exact reformulation as a tractable finite-dimensional convex optimization problem. 
\end{abstract}

\begin{keyword}


statistical estimation \sep regression \sep bootstrap \sep data-driven distributionally robust optimization
\end{keyword}

\end{frontmatter}

\section{Introduction}
Control architectures and autonomy stacks for complex engineering systems are often divided into layers to decompose a complex problem and solution into distinct, manageable sub-problems. Typical layers include state estimation, parameter estimation, system identification, environment perception, prediction, planning/optimization, and feedback control. To simplify designs, uncertainties are often ignored across layers, an approach with deep roots in classical notions of separation and certainty equivalence. However, small uncertainties can become amplified across layers and lead to severe risks. To develop robust architectures, especially as interactions between data-driven learning layers and model-based decision-making layers grow more intricate, much more sophisticated interfaces between layers are required.

We consider an architecture that couples a statistical parameter estimation layer with a constrained optimization layer. The problems within each layer individually are quite standard with long histories and vast literatures. The interface between statistics and decision theory has also been studied since the seminal work of Wald \cite{wald1950statistical}. But there remain important open problems and opportunities at this interface, especially in light of recent developments in distributionally robust optimization. In the statistical estimation layer, we consider a linear regression problem and ordinary least squares estimator, along with a bootstrap resampling method \cite{freedman1981bootstrapping,efron1994introduction} to quantify the unknown distribution of the parameter estimation error. In the decision-making layer, we consider an optimization problem with safety constraints that depend on an \emph{estimator} derived from raw data; for concreteness, we consider estimation of the unknown regression parameter. Since this parameter must be estimated from finite noisy data, any estimate used in the optimization problem comes with a risk of violating the constraint. Robust and stochastic optimization, including the emerging area of distributionally robust optimization \cite{wiesemann2014distributionally,kuhn2019wasserstein}, provide approaches for handling uncertainties in optimization problems. However, there are still crucial gaps between inferential uncertainty representations and uncertainty representations assumed in optimization models. 


Our main contributions are:
\begin{enumerate} 
\item We propose a distributionally robust bootstrap optimization algorithm based on a Wasserstein ambiguity set built directly on the bootstrap samples. This approach tightly integrates an important statistical technique (bootstrapping) with the emerging area of distributionally robust optimization in an end-to-end framework that directly generates robust decisions from data across the architecture layers.
\item We prove that by carefully selecting the radius of the ambiguity set, we obtain a finite-sample safety guarantee that limits the out-of-sample constraint violation. The radius depends on the amount of regression data, number of bootstrap samples, parameter dimension, tail properties of the error distribution, and a confidence parameter.
\item Leveraging recent work, we derive a finite-dimensional convex program that establishes the out-of-sample safety guarantee on risk of constraint violation.
\item We show in a simple numerical example that small parameter estimation errors can lead to arbitrarily large constraint violations and that the proposed approach rigorously limits such risks.
\end{enumerate}

The rest of the paper is structured as follows. Section II presents the architecture that interfaces statistical regression and constrained optimization. Section III derives finite-sample bounds for Wasserstein distances for bootstrapping in regression models. These bounds inform the ambiguity set radius that leads directly to an out-of-sample safety guarantee. Section IV presents an exact reformulation of the optimization problem as a finite-dimensional convex program. Section V provides numerical experiments, and Section VI concludes.

\section{A Data and Decision-Making Interface}
We consider a decision-making architecture where a regression model (Section II.A) is coupled with an optimization problem (Section II.B). 
\begin{figure}
    \centering
    \includegraphics[width=0.99\linewidth]{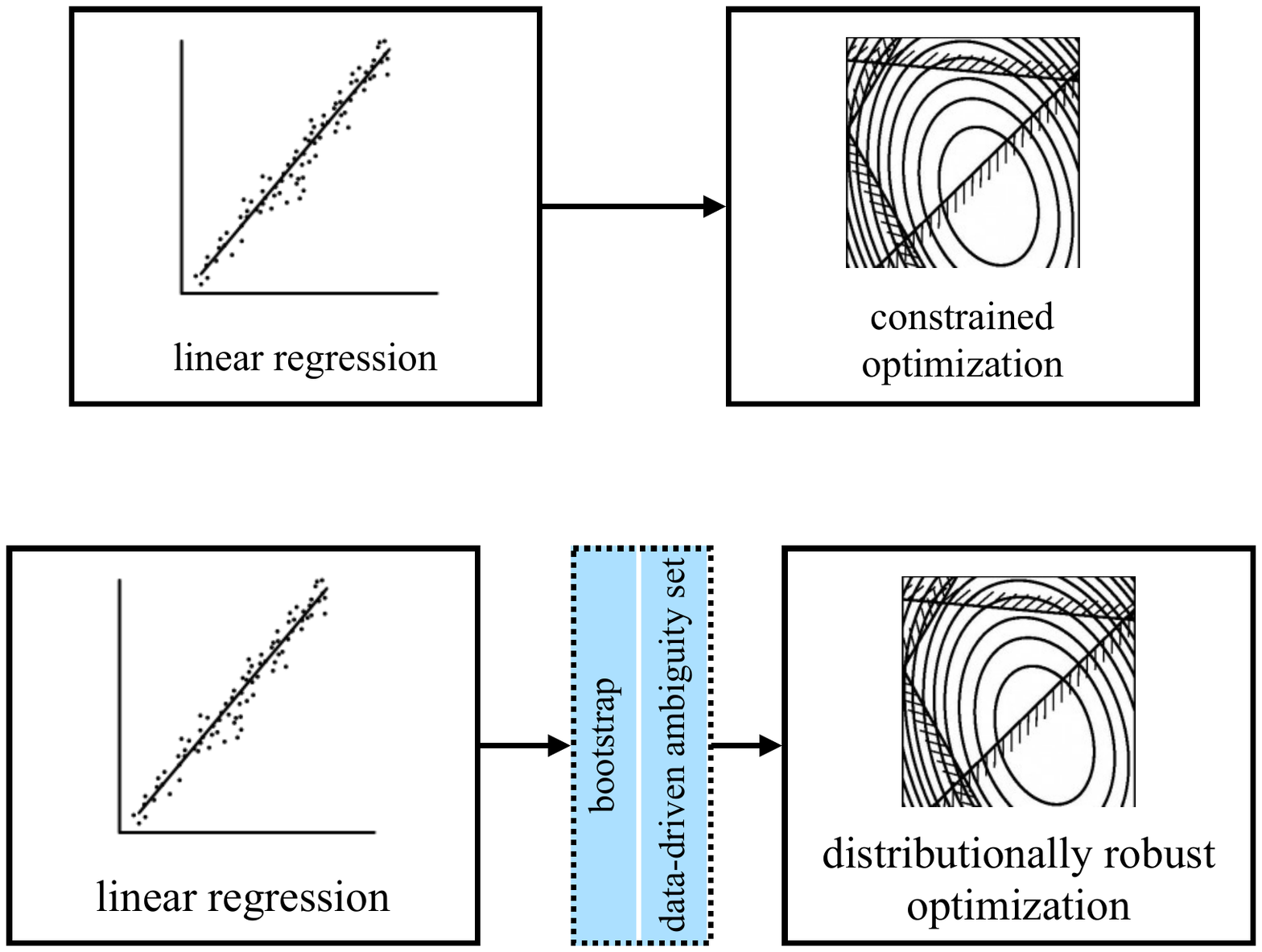}
    \caption{A data-decision architecture with regression and optimization layers. The constraints in the optimization layer depend on an estimator derived from raw regression data with unknown distribution. We integrate the layers by building an ambiguity set for distributionally robust optimization based on bootstrap samples of the regression data.}
    \label{fig:my_label}
\end{figure}

\subsection{Statistical Estimation with Linear Regression} 
We consider a linear regression model
\begin{equation} 
\label{eq:least_squares}
y_n = X_n \beta + \varepsilon_n,
\end{equation}
where $y_n \in \mathbf{R}^n$ is a data vector, $X_n \in \mathbf{R}^{n\times p}$ is a (fixed) data/design matrix, $\beta \in \mathbf{R}^p$ is an unknown parameter vector to be estimated from the data, and $\varepsilon_n \in \mathbf{R}^n$ is a residual vector, with its components denoted by $\ve_{ni} \in \mathbf{R}$. Let $\mathcal{M}(\mathbf{R}^d)$ denote the set of all probability distributions on $\mathbf{R}^d$.

\begin{assume}
\label{assm:noise_dist_asubexp}
The residual components $\varepsilon_{ni}$ are independent and identically distributed (iid) from an unknown distribution function $F \in \mathcal{M}(\mathbf{R})$ having mean zero, variance $\sigma^2$, and \emph{$\alpha$-sub-exponential} tails. In particular, there are constants\footnote{It is possible to obtain all of our results for $\alpha \in (0,2)$, but this case requires some tedious technicalities for the Wasserstein concentration bounds in \cite{fournier2015rate}. We therefore assume $\alpha > 2$ to simplify the exposition.} $\alpha > 2$ and $K>0$ such that $P(|\varepsilon_{ni}| > t) \leq 2\exp(-t^\alpha / K^\alpha)$. Equivalently, there are constants $\alpha > 2$ and $\gamma > 0$ such that $ \mathbf{E} \exp(\gamma | \varepsilon_{ni} |^\alpha) < \infty$. The corresponding (finite) Orlicz norm\footnote{Note that for $\alpha < 1$ this is actually a quasinorm, since the triangle inequality does not hold. Nevertheless, many norm-like properties can be recovered up to $\alpha$-dependent constants.} of order $\alpha$ is denoted by $\| \varepsilon_{ni} \|_{\Psi_\alpha} := \inf \{ t > 0 \mid \mathbf{E} \exp((|\varepsilon_{ni}|/t)^\alpha) < 2 \}$
\end{assume}


The ordinary least squares estimate $\hat \beta_n$ for $\beta$ is given by
\begin{align}
\label{eq:regression}
\hat{\beta}_n = ( X_n^T X_n)^{-1}X_n^T y_n.
\end{align}
Note that $\hat{\beta}_n$ is a random variable due to the residual distribution $F$. This random estimate interfaces with a constrained optimization problem as follows.

\subsection{Optimization with Data-Dependent Safety Constraints} In a decision-making layer, we consider a constrained optimization problem with decision variable $x\in \mathbf{R}^d$ whose objective function is convex and constraint functions\footnote{It's also possible to allow the objective function to depend on $\beta$.} depend affinely on the regression parameter $\beta$:
\begin{equation} 
\label{prob:original}
\begin{aligned} 
 &\text{minimize} \quad  && f(x)   \\ 
 &\text{subject to} && a_j^T(\beta) x \leq b_j(\beta), \quad j = 1,...,m. 
\end{aligned}
\end{equation}
Since $\beta$ is unknown, a widely-used approach to simplify the interface between data and decision is to plug in the nominal point estimate $\hat \beta_n$ and solve the certainty-equivalent problem
\begin{equation} 
\label{prob:uncertain}
\begin{aligned} 
 &\text{minimize} \quad  && f(x)   \\ 
 &\text{subject to} && a_j^T(\hat \beta_n) x \leq b_j(\hat \beta_n), \quad j = 1,...,m.
\end{aligned}
\end{equation}
However, as $\hat \beta_n$ is a random variable computed from noisy finite data, an optimal solution to this problem may be infeasible for the original problem; this approach inherently incurs a risk of constraint violation. Moreover, the lack of knowledge of the exact distribution function of $\hat \beta_n$ makes even evaluating this risk difficult. Our goal is to limit the risk of constraint violation by explicitly accounting for the estimation uncertainty in $\hat \beta_n$.


To incorporate our lack of knowledge of the distribution function itself, we consider a distributionally robust risk constrained optimization problem 
\begin{equation} 
\label{prob:dist_robust}
\begin{aligned} 
 &\text{minimize} \quad  && f(x)   \\ 
 &\text{subject to} && \sup_{P_{\hat \beta_n} \in \m{P} } \rho(a_j^T(\hat \beta_n) x - b_j(\hat \beta_n)) \leq \Delta_j,  \quad j = 1,...,m,
\end{aligned}
\end{equation}
where $\rho:\mathcal{L} \rightarrow \mathbf{R}$ is a risk measure, which is a functional that maps random variables in some space $\mathcal{L}$ to real numbers, $\Delta_j$ is a user-prescribed risk limit, and $\mathcal{P}$ is an ambiguity set of distributions for $\hat \beta_n$. There are several different ways to quantify risk (see, e.g., \cite{artzner1999coherent} for an axiomatic approach involving the notion of risk coherence), with some important examples including expectation, Value at Risk (VaR), and  Conditional Value at Risk (CVaR) \cite{rockafellar2000optimization}. To explicitly connect this optimization problem to the regression problem, the ambiguity set should be built directly upon the finite regression data $(y_n,X_n)$. Thus, we focus on constructing an empirical distribution based on bootstrapping to help us define the ambiguity set.

\subsection{Bootstrapping Regression Models} 
If we knew the residual distribution $F$, we could use the relation in \eqref{eq:regression} to characterize the distribution of $\hat{\beta}_n$. In the lack of knowledge of $F$, bootstrapping is a flexible and powerful approach to infer an empirical distribution for $\hat{\beta}_n$. 
The basic idea of bootstrapping in regression models is to approximate the unknown distribution function $\Phi(F)$ of $\hat \beta_n$ by a bootstrap distribution $\Phi(\hat F_n)$ obtained from resampling the centered estimated residuals. 

Let us define the estimated residuals as
\begin{align*}
\hat \varepsilon_n = y_n - X_n \hat \beta_n. 
\end{align*}
The centered empirical distribution of residuals is denoted by $\hat{F}_n$, which assigns probability $1/n$ to each element $\hat \ve_{ni} - \frac{1}{n} \sum_{i=1}^n \hat \ve_{ni}$, $i=1, \dots, n$.  We can draw samples from $\hat F_n$ by resampling with replacement the elements of the centered residuals. We draw $k$ bootstrap resamples $\{ \varepsilon_n^{*i} \}_{i=1}^{k}$, with components of each $\varepsilon_n^{*i} \in \mathbf{R}^n$ drawn iid from $\hat F_n$, and construct the ``fake'' bootstrap data
\begin{equation}
y_n^{*i} = X_n \hat \beta_n + \varepsilon_n^{*i}, \quad i=1,...,k.
\end{equation}
We then form the bootstrap parameter estimates
\begin{equation}
\hat \beta_n^{*i} = (X_n^T X_n)^{-1} X_n^T y_n^{*i}, \quad i=1,...,k.
\end{equation}
The estimates $\hat \beta_n^{*i}$, $i = 1, \dots, k$, empirically approximate the unknown distribution function  $\Phi(F)$. This bootstrap approximation for regression models has been shown to be valid asymptotically  \cite{bickel1981some,freedman1981bootstrapping}. Here, we are interested in developing non-asymptotic finite sample bounds to define a bootstrap-based ambiguity set for problem \eqref{prob:dist_robust}. 

\subsection{Bootstrap-based Ambiguity Sets} Connecting back to the optimization problem, we construct an ambiguity set based on the bootstrap samples. Let $\Phi^*_k(\hat F_n)$ be the empirical distribution function of the $k$ bootstrap samples $\hat \beta_n^{*i}$, $i = 1, \dots, k$. 

For $q \in [1,\infty)$, the $q$-Wasserstein distance  between two probability distributions $\mu, \nu \in \mathcal{M}(\mathbf{R}^p)$ is defined as
\begin{equation}
\label{eq:wasserstein}
d_q(\mu, \nu) := \inf_{\gamma \in \mathcal{C}(\mu, \nu)} \left( \mathbf{E} \| \xi_1 - \xi_2 \| ^q \right)^{1/q}
\end{equation}
where the infimum is over joint distributions $\mathcal{C}(\mu, \nu)$ of $\xi_1$ and $\xi_2$ with marginals $\mu$ and $\nu$, respectively. The norm can be any norm on $\mathbf{R}^p$, but our bounds utilize the 2-norm. The $q-$Wasserstein distance defined in \eqref{eq:wasserstein} is a metric on the space of probability distributions with bounded $q$-th moment. 

We define a Wasserstein distance-based bootstrap ambiguity set
\begin{equation}
\label{eq:ambiguity_set}
\mathcal{P} = \{ \mu \in \mathcal{M}(\mathbf{R}^p) \mid d_q( \Phi^*_k(\hat F_n),\mu) \leq \epsilon \},
\end{equation}
i.e., the set of distributions on $\mathbf{R}^p$ within $\epsilon$ distance from the empirical bootstrap distribution $\Phi^*_k(\hat F_n)$. This ambiguity set is built on a finite amount $n$ of regression data and a finite number $k$ of bootstrap samples. Is it possible to select the radius $\epsilon$ to ensure that the ambiguity set contains the exact distribution $\Phi(F)$ with high probability? This would provide a finite-data out-of-sample safety guarantee for Problem \ref{prob:uncertain}. 


\section{Finite-Sample Bounds for Wasserstein Distances in Bootstrap Regression}
Our main result establishes a radius $\epsilon$ that ensures that the ambiguity set contains the exact distribution $\Phi(F)$ with high probability. This ambiguity set depends on the amount of data $n$, number of boostrap samples $k$, parameter dimension $p$, confidence level $\delta$, and residual tail properties.

\begin{thm}[Finite-Sample Bound for Wasserstein Distance in Bootstrap Regression] \label{thm:mainconcbound}
Suppose for any $n,k,p$ and confidence level $\delta \in (0,1)$ the Wasserstein radius is selected to be at least $\epsilon = \epsilon_1 + \epsilon_2 + \epsilon_3$, where 
$$\epsilon_1 = 
\begin{cases}
\left( \frac{\ln (3 c_1 \delta^{-1})}{c_2 k} \right)^{\frac{1}{\max(p,2)}} & \text{if } k \geq \frac{\ln (3 c_1 \delta^{-1})}{c_2} \\ 
\left( \frac{\ln (3 c_1 \delta^{-1})}{c_2 k} \right)^{\frac{1}{\alpha}} & \mbox{if }k < \frac{\ln (3 c_1 \delta^{-1})}{c_2} 
\end{cases}
$$ 
$$\epsilon_2 =  \sqrt{n}L \left( \frac{\bar L \| \varepsilon_{n1} \|_{\Psi_\alpha} \ln (6 \delta^{-1})}{c} \right)^{\frac{1}{\alpha}} +  \sqrt{n}L \sigma \sqrt{\frac{p+1}{n}} $$
$$\epsilon_3 =  \sqrt{n}L \begin{cases}
 \left( \frac{\ln (3 c_3 \delta^{-1})}{c_4 n} \right) & \text{if } n \geq \frac{\ln (3 c_3 \delta^{-1})}{c_4} \\ 
\left( \frac{\ln (3 c_3 \delta^{-1})}{c_3 n} \right)^{\frac{2}{\alpha}} & \mbox{if }n < \frac{\ln (3 c_3 \delta^{-1})}{c_4},
\end{cases} $$
with $L=\|(X_n^T X_n)^{-1}X_n^T \|$ and $\bar L = \frac{1}{\sqrt{n}}\| (\frac{1}{n} \mathbf{1} \mathbf{1}^T + \Pi)^{1/2} \|$ (here $\| \cdot \|$ denotes spectral norm), where $c_1, c_2, c_3, c_4, c$ are constants that depend on $\alpha$ and $p$.
Then we have 
$$ P( d_1(\Phi(F), \Phi_k^*(\hat F_n))  \geq \epsilon) \leq \delta .$$
\end{thm}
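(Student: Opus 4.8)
The plan is to bound $d_1(\Phi(F),\Phi_k^*(\hat F_n))$ by a chain of triangle inequalities through intermediate laws and to charge one of the three radii to each resulting term. First I would introduce the \emph{population} bootstrap law $\Phi(\hat F_n)$, i.e.\ the exact conditional law of $\hat\beta_n^* = \hat\beta_n + (X_n^TX_n)^{-1}X_n^T\varepsilon_n^*$ with $\varepsilon_n^*\sim \hat F_n^{\otimes n}$, and split
$$d_1(\Phi(F),\Phi_k^*(\hat F_n)) \le d_1(\Phi(F),\Phi(\hat F_n)) + d_1(\Phi(\hat F_n),\Phi_k^*(\hat F_n)).$$
The second term is the error from drawing only $k$ bootstrap samples: since $\Phi_k^*(\hat F_n)$ is the empirical measure of $k$ i.i.d.\ draws from $\Phi(\hat F_n)$ and, conditionally on the data, $\hat F_n$ is compactly supported (so $\Phi(\hat F_n)$ has light tails with a data-dependent scale), I would invoke the Fournier--Guillin Wasserstein concentration inequality \cite{fournier2015rate} in $\mathbf{R}^p$. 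Solving its two-regime deviation bound for the radius at confidence $\delta/3$ produces exactly $\epsilon_1$, with exponent $1/\max(p,2)$ in the small-radius (large-$k$) regime and $1/\alpha$ in the heavy-radius (small-$k$) regime.

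For the \emph{bootstrap-approximation} term $d_1(\Phi(F),\Phi(\hat F_n))$ I would exploit that both laws are pushforwards of product measures on $\mathbf{R}^n$ under the affine map $\varepsilon\mapsto c + A\varepsilon$, where $A=(X_n^TX_n)^{-1}X_n^T$, $\|A\|=L$, and the centers are $c=\beta$ and $c=\hat\beta_n$ respectively. Inserting the auxiliary law $(\varepsilon\mapsto\beta+A\varepsilon)_{\#}\hat F_n^{\otimes n}$ and using (i) the $L$-Lipschitz contraction of the Wasserstein distance under the linear part and (ii) a coordinatewise coupling of the product measures (Cauchy--Schwarz) gives
$$d_1(\Phi(F),\Phi(\hat F_n)) \le \sqrt{n}\,L\,d_2(F,\hat F_n) + \|\hat\beta_n-\beta\|,$$
which is the source of the recurring $\sqrt{n}L$ prefactor. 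The shift $\|\hat\beta_n-\beta\|=\|A\varepsilon_n\|$ is controlled through its second moment $\sigma^2\,\mathrm{tr}((X_n^TX_n)^{-1})$, of order $L^2\sigma^2 p$, yielding the term $\sqrt{n}L\sigma\sqrt{(p+1)/n}$ of $\epsilon_2$.

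It then remains to bound the one-dimensional distance $d_2(F,\hat F_n)$ between the true residual law and the centered empirical law of the \emph{estimated} residuals. Here I would insert a third reference law, the empirical measure $\tilde F_n$ of the \emph{true} residuals, and split $d_2(F,\hat F_n)\le d_2(F,\tilde F_n)+d_2(\tilde F_n,\hat F_n)$. The term $d_2(F,\tilde F_n)$ is again a Fournier--Guillin empirical-measure concentration, now in dimension one, which at confidence $\delta/3$ produces $\epsilon_3$ with its two regimes. The remaining term $d_2(\tilde F_n,\hat F_n)$ is the genuinely non-standard piece: the estimated residuals $\hat\varepsilon_n=(I-\Pi)\varepsilon_n$ (with $\Pi$ the hat matrix), together with their mean correction, differ from the true residuals through the linear operator $\tfrac1n\mathbf{1}\mathbf{1}^T+\Pi$, so $\hat F_n$ is neither i.i.d.\ nor distributed as $F$. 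Using the paired coupling, $d_2(\tilde F_n,\hat F_n)^2\le \tfrac1n\|(\tfrac1n\mathbf{1}\mathbf{1}^T+\Pi)\varepsilon_n\|^2$, and I would bound this linear image of the $\alpha$-sub-exponential residual vector by an Orlicz-$\alpha$ deviation inequality whose scale is set by the spectral quantity $\bar L$ and the residual Orlicz norm $\|\varepsilon_{n1}\|_{\Psi_\alpha}$; at confidence $\delta/3$ this is the first term of $\epsilon_2$ and explains the exponent $1/\alpha$. A union bound over the three failure events, each of probability $\delta/3$, closes the argument.

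I expect the main obstacle to be this last step, the control of $d_2(\tilde F_n,\hat F_n)$, because the estimated residuals are a projected, mean-corrected, and hence dependent and non-identically-distributed transformation of the unobservable true residuals. Obtaining a clean, dimension- and tail-explicit bound requires isolating the projection/centering operator, bounding its spectral action (the origin of $\bar L$ and of the factor $p+1$), and applying a sub-exponential deviation inequality with constants depending only on $\alpha$ and $p$; keeping every constant explicit while matching the regime split of the Fournier--Guillin bounds is where the bookkeeping is most delicate.
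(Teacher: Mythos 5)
Your overall route is the same as the paper's: the triangle inequality through the exact bootstrap law $\Phi(\hat F_n)$ (cf.\ \eqref{eq:tri1}), Fournier--Guillin concentration for the two empirical-measure terms (Theorems \ref{thm:wassconc1} and \ref{thm:wassconc2}, giving $\epsilon_1$ and $\epsilon_3$), the contraction-plus-coordinatewise-coupling bound $d_1(\Phi(F),\Phi(\hat F_n))\le\sqrt n\,L\,d_2(F,\hat F_n)$ of \eqref{eq:bound1}, the second triangle inequality \eqref{eq:tri2}, a paired coupling and $\alpha$-sub-exponential convex concentration for the projected/centered residual term (Lemma \ref{lem:dineq}, Proposition \ref{prop:conc1}), and a union bound via Lemma \ref{lem:additive_prob}.

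The genuine gap is in your bookkeeping of $\epsilon_2$, and it stems from the shift term $\|\hat\beta_n-\beta\|$ you introduce. The concentration bound for $d_2(F_n,\hat F_n)$ controls deviations around its \emph{mean}, which is not zero: in the paper, the second summand of $\epsilon_2$, namely $\sqrt n L\sigma\sqrt{(p+1)/n}$, is exactly the bound on $\sqrt n L\,\mathbf{E}\,d_2(F_n,\hat F_n)$ (Lemma \ref{lem:dineq2}), and Proposition \ref{prop:conc1} requires mean plus deviation. You instead spend that summand on the shift, leaving only the deviation part (the first summand of $\epsilon_2$) to cover $d_2(F_n,\hat F_n)$; as a tail bound this is false without the mean term. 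Moreover, even granting a budget slot, controlling the shift ``through its second moment'' yields only Markov's inequality, $P(\|\hat\beta_n-\beta\|\ge L\sigma\sqrt{p+1})\le p/(p+1)$, nowhere near the required $1-\delta/3$; you would need a separate concentration argument, a fourth radius, and a union bound over four events, which changes all constants and proves a different (larger-radius) statement. The paper's proof has no shift at all: its contraction lemma compares $\Phi(F)$ and $\Phi(\hat F_n)$ as push-forwards of the product measures $F^{\otimes n}$ and $\hat F_n^{\otimes n}$ under the \emph{same} affine map, i.e., as the (centered) laws of $A\varepsilon_n$ and $A\varepsilon_n^*$, so no $\|\hat\beta_n-\beta\|$ term arises and the three-term budget closes. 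Under either reading, your plan as written does not establish the stated radius. Two minor further points: your paired-coupling bound should involve the square root of $\frac1n\mathbf{1}\mathbf{1}^T+\Pi$, that is, $d_2(F_n,\hat F_n)^2\le\frac1n\varepsilon_n^T(\frac1n\mathbf{1}\mathbf{1}^T+\Pi)\varepsilon_n$ as in Lemma \ref{lem:dineq}; the squared-operator form you wrote is not equivalent because the two projections do not commute. And justifying Fournier--Guillin for the bootstrap term via compact support of $\hat F_n$ makes the constants data-dependent; the paper instead uses Lemma \ref{betabound} to obtain an $\alpha$-sub-exponential tail for $\|\hat\beta_n^*\|$ so that $c_1,c_2$ depend only on $\alpha$ and $p$.
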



The proof is provided in the Appendix through a series of equalities/inequalities, based on \cite{bickel1981some, freedman1981bootstrapping, fournier2015rate}.

\textbf{Remark:} A standard assumption for consistency of bootstrapping for regression (see equation (1.4) in \cite{freedman1981bootstrapping}) implies that $L$ scales as $1/\sqrt n$. Since $\bar L$ also scales as $1/\sqrt n$, it can then be seen that for fixed confidence $\delta$ and residual tail properties, the radius $\eps$ of the ambiguity set shrinks to zero as $n$ and $k$ increase. Note that the rate of decrease becomes very slow with increasing parameter dimension $p$; this is an unavoidable curse of dimensionality for convergence of the empirical distribution in Wasserstein distance \cite{fournier2015rate}. 

While this result provides strong theoretical justification for the proposed approach, there are practical limitations in choosing $\eps$ according to the above result. First, the constants $c_1, c_2, c_3, c_4, c$ are not easy to compute analytically. Second, the upper bound on $\eps$ can be much larger than necessary. In practice, the radius is better viewed as a tuning parameter used to balance performance with risk of constraint violation.

\section{Finite-Sample Safety Guarantee and Exact Tractable Reformulation}
The finite-sample concentration bound of Theorem \ref{thm:mainconcbound} leads directly to the following out-of-sample safety guarantee.
\begin{cor}[Out-of-Sample Safety Guarantee] \label{cor:safety}
Consider the distributionlly robust optimization problem \eqref{prob:dist_robust} with bootstrap-based ambiguity set \eqref{eq:ambiguity_set}. Suppose the Wasserstein radius is selected according to Theorem \ref{thm:mainconcbound}. If \eqref{prob:dist_robust} is feasible, then for an optimizer $\hat x_{n,k}^*$ we have for $j=1,...,m,$
\begin{equation}
    P( \rho(a_j^T(\hat \beta_n)\hat x_{n,k}^* - b_j(\hat \beta_n)) \leq \Delta_j ) \geq 1 - \delta.
\end{equation}
\end{cor}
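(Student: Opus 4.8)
The plan is to derive the out-of-sample guarantee directly from the measure-concentration bound of Theorem~\ref{thm:mainconcbound} together with feasibility of the robust constraint, without any new estimates. The crucial structural observation is that two distinct sources of randomness must be kept separate. There is an \emph{outer} randomness over the realized regression residuals $\varepsilon_n$ and the bootstrap resamples, which jointly determine the empirical bootstrap distribution $\Phi_k^*(\hat F_n)$, the ambiguity set $\mathcal{P}$, and the optimizer $\hat x_{n,k}^*$; and there is an \emph{inner} distributional randomness of the uncertain parameter $\hat\beta_n$, over which the risk functional $\rho$ acts. Conditional on the outer randomness, $\hat x_{n,k}^*$ and $\mathcal{P}$ are fixed, and $\rho(a_j^T(\hat\beta_n)\hat x_{n,k}^* - b_j(\hat\beta_n))$ is a deterministic number once a law for $\hat\beta_n$ is specified. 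The corollary is a statement about the outer probability that this number, evaluated at the \emph{true} law $\Phi(F)$, respects the risk limit $\Delta_j$.

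First I would fix $q=1$ so that the ambiguity set \eqref{eq:ambiguity_set} is a $1$-Wasserstein ball, matching the metric controlled in Theorem~\ref{thm:mainconcbound}. This is essential: since the Wasserstein distances are monotone in their order ($d_1 \leq d_q$ for $q\geq 1$), a bound on $d_1$ does not in general bound $d_q$, so only for $q=1$ does the concentration result certify that $\Phi(F)$ lies inside $\mathcal{P}$. I then define the good event
$$G = \{\, d_1(\Phi(F), \Phi_k^*(\hat F_n)) \leq \epsilon \,\},$$
which lives in the outer probability space. With $\epsilon$ selected as in Theorem~\ref{thm:mainconcbound}, that theorem gives $P(G)\geq 1-\delta$, and on $G$ we have $\Phi(F)\in\mathcal{P}$.

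Next I would invoke feasibility. Because $\hat x_{n,k}^*$ is an optimizer of \eqref{prob:dist_robust}, it in particular satisfies the robust constraint
$$\sup_{P_{\hat\beta_n}\in\mathcal{P}} \rho\!\left(a_j^T(\hat\beta_n)\hat x_{n,k}^* - b_j(\hat\beta_n)\right) \leq \Delta_j, \qquad j=1,\dots,m.$$
On the event $G$ the true law $\Phi(F)$ is an admissible element of the supremum, so dropping the supremum and evaluating at $P_{\hat\beta_n}=\Phi(F)$ yields $\rho(a_j^T(\hat\beta_n)\hat x_{n,k}^* - b_j(\hat\beta_n))\leq\Delta_j$ for every $j$, where $\rho$ is now computed under the true law of $\hat\beta_n$. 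Hence $G \subseteq \{\,\rho(a_j^T(\hat\beta_n)\hat x_{n,k}^* - b_j(\hat\beta_n))\leq\Delta_j\,\}$, and taking outer probabilities gives
$$P\!\left(\rho(a_j^T(\hat\beta_n)\hat x_{n,k}^* - b_j(\hat\beta_n)) \leq \Delta_j\right) \geq P(G) \geq 1-\delta.$$

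The deduction is short, so the main obstacle is conceptual rather than computational: one must argue carefully the separation of the two randomness layers, and in particular justify that on the outer event $G$ the fixed true distribution $\Phi(F)$ may legitimately be substituted for the dummy $P_{\hat\beta_n}$ inside the worst-case constraint, even though $\mathcal{P}$ and $\hat x_{n,k}^*$ are themselves random through the very same data. The only genuine technical input—the high-probability inclusion $\Phi(F)\in\mathcal{P}$—is supplied verbatim by Theorem~\ref{thm:mainconcbound}; everything else is monotonicity of the supremum and a single-event (union-free) probability bound, which in fact delivers the guarantee for all $j$ simultaneously on $G$.
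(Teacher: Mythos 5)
Your proposal is correct and follows exactly the argument the paper intends: Theorem~\ref{thm:mainconcbound} places $\Phi(F)$ inside the ambiguity set $\mathcal{P}$ on an event of probability at least $1-\delta$, and on that event feasibility of $\hat x_{n,k}^*$ lets you drop the supremum and evaluate the risk constraint at the true law, for all $j$ simultaneously. The paper states the corollary without a written proof (``leads directly''), and your write-up supplies precisely that direct argument, with the added worthwhile clarifications that $q=1$ must be used in \eqref{eq:ambiguity_set} to match the metric of Theorem~\ref{thm:mainconcbound} and that the outer (data/bootstrap) and inner (distributional) randomness must be kept separate.
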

Corollary \ref{cor:safety} states that the risk constraint holds with high probability as long as the ambiguity set radius is selected appropriately, despite lack of knowledge of the exact distribution of $\hat \beta_n$ and the fact that we resample only from an approximate bootstrap empirical distribution. We can specialize Corollary \ref{cor:safety} to obtain an explicit constraint satisfaction guarantee for the original problem \eqref{prob:original}.
\begin{thm}
Consider the distributionlly robust optimization problem \eqref{prob:dist_robust} with bootstrap-based ambiguity set \eqref{eq:ambiguity_set} for the expected value risk meaasure $\rho(\cdot) = \mathbf{E}(\cdot)$ with risk limit $\Delta_j = 0$. Suppose the Wasserstein radius is selected according to Theorem \ref{thm:mainconcbound}. If \eqref{prob:dist_robust} is feasible, then for an optimizer $\hat x_{n,k}^*$ we have for $j=1,...,m,$
\begin{equation}
    P( a_j^T(\beta ) \hat x_{n,k}^* - b_j(\beta)) \leq 0 ) \geq 1 - \delta.
\end{equation}
\end{thm}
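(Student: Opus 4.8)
The plan is to reduce the statement to Corollary \ref{cor:safety} by exploiting two structural facts that hold specifically for the expected-value risk measure: the affine dependence of the constraints on $\beta$, and the unbiasedness of the ordinary least squares estimator. The only probabilistic ingredient is the high-probability containment $\Phi(F) \in \mathcal{P}$ supplied by Theorem \ref{thm:mainconcbound}; everything else is deterministic on the favorable event.

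First I would condition on the event $\mathcal{E} = \{ \Phi(F) \in \mathcal{P} \}$, which by the choice of radius in Theorem \ref{thm:mainconcbound} satisfies $P(\mathcal{E}) \geq 1-\delta$. On $\mathcal{E}$, the optimizer $\hat x_{n,k}^*$ is feasible for \eqref{prob:dist_robust}, so its distributionally robust constraint holds for every distribution in $\mathcal{P}$, and in particular for the true distribution $\Phi(F)$. With $\rho = \mathbf{E}$ and $\Delta_j = 0$ this gives, for each $j$,
\[ \mathbf{E}_{\Phi(F)}\left[ a_j^T(\hat\beta_n)\hat x_{n,k}^* - b_j(\hat\beta_n) \right] \leq \sup_{P \in \mathcal{P}} \mathbf{E}_{P}\left[ a_j^T(\hat\beta_n)\hat x_{n,k}^* - b_j(\hat\beta_n) \right] \leq 0, \]
where the expectation is over the hypothetical draw $\hat\beta_n \sim \Phi(F)$.

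Next I would collapse this expectation onto the true parameter. For fixed $x$ the map $\beta \mapsto a_j^T(\beta)x - b_j(\beta)$ is affine, so $g_j(\hat\beta_n) := a_j^T(\hat\beta_n)\hat x_{n,k}^* - b_j(\hat\beta_n)$ is affine in the random vector $\hat\beta_n$, whence $\mathbf{E}_{\Phi(F)}[g_j(\hat\beta_n)] = g_j(\mathbf{E}_{\Phi(F)}[\hat\beta_n])$ by linearity. Since $\hat\beta_n = \beta + (X_n^T X_n)^{-1}X_n^T \varepsilon_n$ and the residuals have mean zero by Assumption \ref{assm:noise_dist_asubexp}, OLS is unbiased and $\mathbf{E}_{\Phi(F)}[\hat\beta_n] = \beta$. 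Therefore $\mathbf{E}_{\Phi(F)}[g_j(\hat\beta_n)] = a_j^T(\beta)\hat x_{n,k}^* - b_j(\beta)$, and combining with the previous display shows that on $\mathcal{E}$ we have $a_j^T(\beta)\hat x_{n,k}^* - b_j(\beta) \leq 0$. Since $P(\mathcal{E}) \geq 1-\delta$, the claimed guarantee follows.

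The hard part is conceptual rather than computational: one must keep the two layers of randomness cleanly separated---the outer probability over the regression data and bootstrap draws, which fixes both the ambiguity set $\mathcal{P}$ and the optimizer $\hat x_{n,k}^*$, versus the inner expectation over $\hat\beta_n \sim \Phi(F)$ that the risk measure integrates against. The step that makes the theorem succeed is the collapse $\mathbf{E}_{\Phi(F)}[g_j(\hat\beta_n)] = g_j(\beta)$, which relies simultaneously on affinity of the constraints and on unbiasedness of the estimator; were either to fail---nonlinear constraints, or a regularized and hence biased estimator---the worst-case expectation would bound a mean distinct from $g_j(\beta)$, and the clean feasibility guarantee for the original problem \eqref{prob:original} would break down.
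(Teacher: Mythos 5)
Your proof is correct and takes essentially the same route as the paper: the paper's one-sentence proof invokes Corollary \ref{cor:safety} together with linearity of the constraints in $\beta$ and unbiasedness of $\hat\beta_n$, which is precisely the collapse $\mathbf{E}_{\Phi(F)}[g_j(\hat\beta_n)] = g_j(\beta)$ that you carry out. Your version simply makes explicit what the paper leaves implicit---conditioning on the containment event $\{\Phi(F) \in \mathcal{P}\}$ from Theorem \ref{thm:mainconcbound}, evaluating the worst-case constraint at the true distribution $\Phi(F)$, and keeping the outer (data/bootstrap) randomness separate from the inner expectation.
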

\begin{proof}
Since the constraints are linear in $\beta$ and the least-squares estimate $\hat \beta_n$ is unbiased, the result follows directly from Corollary \ref{cor:safety}.
\end{proof}

Though we now have an out-of-sample guarantee, the distributionally robust constraint in problem \eqref{prob:dist_robust} is infinite-dimensional. Fortunately, for certain risk measures the recent results of \cite{esfahani2018data} can be directly applied to obtain an exact reformulation as a finite-dimensional convex program. For concreteness, we state the result for the conditional value at risk (CVaR) at level $\gamma$, an important special case of the risk measure defined for a random variable $z$ by $$\mathbf{CVaR}_\gamma(z) := \inf_\tau \mathbf{E}\frac{1}{\gamma}\max(z - \tau, 0) + \tau. $$ In this case, the risk measure-constraint function composition is piecewise affine in $\hat \beta_n$. Exact reformulations are also available for more general risk measure-constraint function compositions, see \cite{esfahani2018data}.
\begin{thm}[Exact Tractable Reformulation, Corollary 5.1 of \cite{esfahani2018data}]
The distributionlly robust optimization problem \eqref{prob:dist_robust} with conditional value at risk $\rho(\cdot) = \mathbf{CVaR}_\gamma(\cdot)$ and bootstrap-based ambiguity set \eqref{eq:ambiguity_set} has an exact reformulation as the finite-dimensional convex optimization problem:
\begin{equation} \nonumber
\begin{aligned} 
 &\underset{x,s_j,\tau_j,\lambda_j}{\text{minimize}} \quad  && f(x)   \\ 
 &\text{subject to} && \epsilon \lambda_j + \frac{1}{k} \mathbf{1}^T s_j \leq \Delta_j \\ 
& &&  a_j^T( \hat \beta^{*i}_n)x - b_j( \hat \beta^{*i}_n) - (1 - \gamma) \tau_j \leq s_{ji}\\
& && \gamma \tau_j \leq s_j, \quad i=1,...,k, \quad j=1,...,m \\
& && \|x \|_{*} \leq \lambda_j \\
& && x \geq 0,
\end{aligned}
\end{equation}
with decision variables $x \in \mathbf{R}^d, s_j \in \mathbf{R}^k, \tau_j \in \mathbf{R}, \lambda_j \in \mathbf{R}$ for $j=1,..,m$, where $\|\cdot \|_*$ is the dual norm of that used in the Wasserstein distance definition \eqref{eq:wasserstein}. For the $1-$ and $\infty-$norms this is a linear program, and for the 2-norm it is a second-order cone program. 
\end{thm}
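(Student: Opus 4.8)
The plan is to reduce the infinite-dimensional worst-case $\mathbf{CVaR}_\gamma$ constraint to the Wasserstein distributionally robust duality framework of \cite{esfahani2018data}, so that the stated program emerges as a direct instance of their Corollary 5.1. The only structural facts I need are (i) that the risk-measure–constraint composition is a pointwise maximum of finitely many functions that are affine in the uncertain parameter $\hat\beta_n$, and (ii) that the ambiguity set $\mathcal{P}$ is a $1$-Wasserstein ball centered at the discrete bootstrap distribution $\Phi_k^*(\hat F_n)$, which places mass $1/k$ on each of the $k$ atoms $\hat\beta_n^{*i}$.

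First I would fix a constraint index $j$ and write $\ell_j(\beta) := a_j^T(\beta)x - b_j(\beta)$. Because $a_j$ and $b_j$ depend affinely on $\beta$, the map $\beta\mapsto\ell_j(\beta)$ is affine for each fixed $x$, with $\beta$-gradient equal to $x$ in the setting $a_j(\beta)=\beta$, $b_j$ constant (and more generally a fixed affine image of $x$). Using the Rockafellar--Uryasev representation $\mathbf{CVaR}_\gamma(z)=\inf_\tau\{\tau+\tfrac1\gamma\mathbf{E}\max(z-\tau,0)\}$, the integrand $\psi(\beta):=\tau_j+\tfrac1\gamma\max(\ell_j(\beta)-\tau_j,0)$ is a pointwise maximum of the two affine-in-$\beta$ functions $\tau_j$ and $\tfrac1\gamma\ell_j(\beta)-\tfrac{1-\gamma}{\gamma}\tau_j$. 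This is exactly the piecewise-affine loss class for which \cite{esfahani2018data} prove an exact reformulation.

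Next I would invoke the strong-duality result for worst-case expectations over a Wasserstein ball (Theorem 4.2 / Corollary 5.1 of \cite{esfahani2018data}). Interchanging the infimum over the scalar $\tau_j$ with the supremum over $P\in\mathcal{P}$ — legitimate since the objective is convex in $\tau_j$ and concave (linear) in $P$, so the minimax argument of \cite{esfahani2018data} applies — lets me treat $\tau_j$ as an outer decision variable and require $\sup_{P\in\mathcal{P}}\mathbf{E}_P[\psi(\beta)]\leq\Delta_j$. For the $1$-Wasserstein ball this worst-case expectation admits the dual $\inf_{\lambda\geq0}\{\lambda\epsilon+\tfrac1k\sum_i\sup_\beta[\psi(\beta)-\lambda\|\beta-\hat\beta_n^{*i}\|]\}$. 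Each inner supremum is finite and collapses to $\psi$ evaluated at the atom $\hat\beta_n^{*i}$ precisely when $\lambda$ dominates the dual norm of every affine piece's gradient; encoding the two pieces with epigraph variables $s_{ji}$ then produces the constraints $a_j^T(\hat\beta_n^{*i})x-b_j(\hat\beta_n^{*i})-(1-\gamma)\tau_j\leq s_{ji}$ and $\gamma\tau_j\leq s_{ji}$, the radius term $\epsilon\lambda_j$, and the Lipschitz/dual-norm constraint $\|x\|_*\leq\lambda_j$. Collecting across $i$ and $j$ and carrying the nonnegativity $x\geq 0$ from the decision set reproduces the stated program; choosing the $1$-, $\infty$-, or $2$-norm in the Wasserstein metric makes $\|\cdot\|_*$ the $\infty$-, $1$-, or $2$-norm, giving an LP or an SOCP.

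The main obstacle — and the reason this is stated as a citation rather than an elementary calculation — is establishing the absence of a duality gap in the worst-case-expectation problem over the Wasserstein ball, together with the legitimacy of the $\inf_{\tau_j}/\sup_P$ interchange. Both rest on the measure-theoretic machinery of \cite{esfahani2018data}: a growth/Lipschitz condition on the loss guaranteeing finiteness, convexity of the loss in $\beta$, and attainment (or approximation) of the worst-case measure by perturbations of the empirical atoms. Since our loss is piecewise affine with support $\mathbf{R}^p$, these hypotheses hold, and the remaining work is the bookkeeping that maps the generic dual variables onto $(\lambda_j,s_j,\tau_j)$ with the correct $\gamma$-scaling between the Rockafellar--Uryasev integrand and the reformulated constraints.
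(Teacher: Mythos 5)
Your route is the same one the paper takes: the paper offers no proof at all, simply invoking Corollary 5.1 of \cite{esfahani2018data}, and your outline correctly unpacks what that invocation consists of --- the Rockafellar--Uryasev representation of $\mathbf{CVaR}_\gamma$, the exchange of $\inf_{\tau_j}$ with $\sup_{P\in\mathcal{P}}$ (which needs, beyond convexity--concavity, the weak compactness of the Wasserstein ball that the cited machinery supplies), the strong-duality formula for a worst-case expectation over a $1$-Wasserstein ball centered at the $k$-atom bootstrap distribution, and the epigraph encoding of the two affine pieces. You also correctly note that the dual-norm constraint $\|x\|_*\leq\lambda_j$ presumes $\nabla_\beta \ell_j = x$, i.e.\ the case $a_j(\beta)=\beta$, $b_j$ constant.

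The genuine problem is that the one step you explicitly defer --- ``the bookkeeping \dots with the correct $\gamma$-scaling'' --- is exactly the step that does not come out as you claim. Writing $\ell_j(\beta) := a_j^T(\beta)x - b_j(\beta)$, the integrand's two affine pieces are $\tau_j$ and $\tfrac{1}{\gamma}\ell_j(\beta)-\tfrac{1-\gamma}{\gamma}\tau_j$, with $\beta$-gradients $0$ and $\tfrac{1}{\gamma}x$. Applying the duality formula and then rescaling $s_{ji}\mapsto \gamma s_{ji}$, $\lambda_j\mapsto\gamma\lambda_j$ to clear the denominators does reproduce the printed constraints $\ell_j(\hat\beta_n^{*i})-(1-\gamma)\tau_j\leq s_{ji}$, $\gamma\tau_j\leq s_{ji}$, and $\|x\|_*\leq\lambda_j$, but the budget constraint becomes $\epsilon\lambda_j+\tfrac{1}{k}\mathbf{1}^Ts_j\leq\gamma\Delta_j$, not $\leq\Delta_j$. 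A sanity check at $\epsilon=0$ makes this concrete: the optimal choice $s_{ji}=\max\bigl(\ell_j(\hat\beta_n^{*i})-(1-\gamma)\tau_j,\,\gamma\tau_j\bigr) = \gamma\tau_j+\max\bigl(\ell_j(\hat\beta_n^{*i})-\tau_j,0\bigr)$ gives $\tfrac{1}{k}\mathbf{1}^Ts_j=\gamma\bigl(\tau_j+\tfrac{1}{\gamma k}\sum_i\max(\ell_j(\hat\beta_n^{*i})-\tau_j,0)\bigr)$, i.e.\ $\gamma$ times the empirical CVaR objective, so the program as printed enforces $\mathbf{CVaR}_\gamma\leq\Delta_j/\gamma$ rather than $\leq\Delta_j$. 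So either you carry the factor $\gamma$ explicitly and note that the theorem's first constraint should read $\epsilon\lambda_j+\tfrac{1}{k}\mathbf{1}^Ts_j\leq\gamma\Delta_j$ (equivalently, absorb $\gamma$ into the risk limit), or your concluding assertion that the bookkeeping ``reproduces the stated program'' is false. The gap is small, but it sits precisely at the point you waved off, and a proof of an ``exact reformulation'' stands or falls on exactly this kind of constant.
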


\section{Numerical Experiments}
To illustrate our approach, in this section we present a simple numerical example. 

For the regression layer, we set $n=100$, $p=10$, with entries of the data matrix $X_n$ drawn iid from a standard normal distribution and the residuals drawn iid from a uniform distribution on $[-1,1]$ (and thereafter fixed). The unknown parameter is set to $\beta_i = 0.1i$, $i=1,...,p$.

For the optimization layer, we aim to solve
\begin{equation}
\begin{aligned} 
 &\text{maximize} \quad  && \mathbf{1}^T x   \\ 
 &\text{subject to} && \beta^T x \leq 1 \\
& && x \geq 0.
\end{aligned}
\end{equation}
If $\beta$ were known, the optimal solution would be simply $x^* = \frac{1}{\beta_{n1}}[1 \ 0 \ \cdots 0]^T$. Note that the constraint $\beta^T x^* \leq 1$ is active in this case. However, since $\beta$ is unknown and must be estimated from noisy data, the certainty equivalent solution inevitably incurs a risk of constraint violation. In fact, the constraint violation can be arbitrarily large when the noise causes any parameter estimate component to be close to zero. In particular, for the estimate $\hat \beta_n$, an optimal solution for the certainty equivalent interface is $x_{CE}^* = \frac{1}{\hat \beta_{ni^*}}e_{i^*}$, where $i^* \in \arg \min_i \hat \beta_{ni}$. If $\hat \beta_{ni^*}$ becomes close to zero, then $\beta^T x_{CE}^*$ becomes large, resulting in a severe constraint violation. This motivates the proposed distributionally robust bootstrap optimization method, which offers a principled approach to limit out-of-sample constraint violation risk.

Leveraging the results of \cite{esfahani2018data}, we can formulate a tractable distributionally robust bootstrap optimization problem. We replace the inequality constraint $\beta^T x \leq 1$ with a limit on the conditional value at risk (CVaR) at level $\gamma$ given by $\inf_{\tau} \mathbf{E} \frac{1}{\gamma}\max(\hat \beta_n x - 1 - \tau,  0 ) + \tau \leq \Delta_1$. Then problem \eqref{prob:dist_robust} with bootstrap-based ambiguity set \eqref{eq:ambiguity_set} is equivalent to the finite-dimensional convex program
\begin{equation} \label{prob:opt_ex}
\begin{aligned} 
 &\underset{x,s,\tau,\lambda}{\text{maximize}} \quad  && \mathbf{1}^T x   \\ 
 &\text{subject to} && \varepsilon \lambda + \frac{1}{k} \mathbf{1}^T s \leq \Delta_1 \\ 
& &&  x^T \hat \beta^{*i}_n - (1 - \gamma) \tau \leq s_i + 1, \quad i=1,...,k \\
& && \gamma \tau \leq s \\
& && \|x \|_{2} \leq \lambda \\
& && x \geq 0
\end{aligned}
\end{equation}

\begin{figure}
    \centering
    \includegraphics[width=0.99\linewidth]{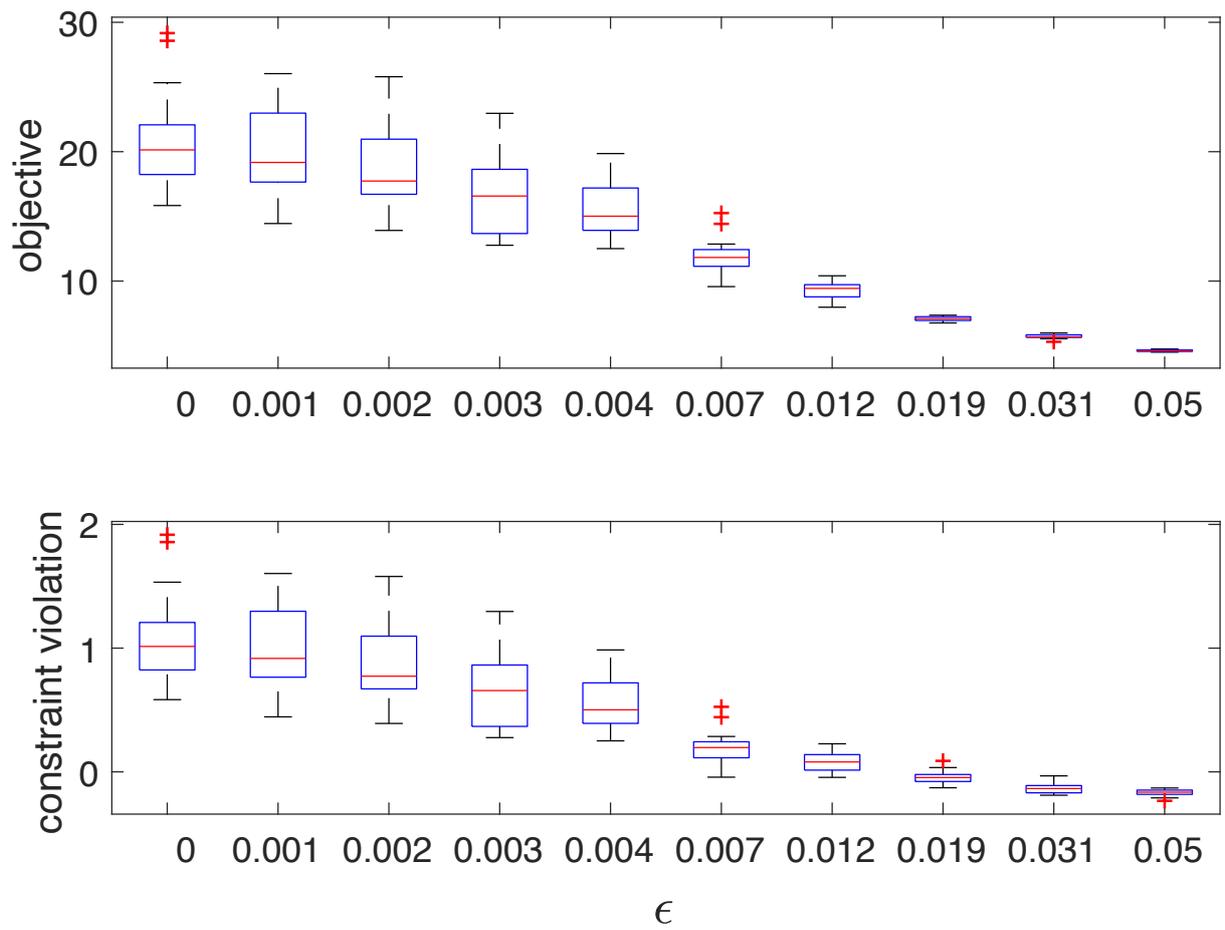}
    \caption{Performance-robustness trade-off with Wasserstein radius. }
    \label{fig:tradeoff}
\end{figure}

We consider a scenario where a noise realization has resulted in $\hat \beta_{ni^*} = 0.006$, in which case the certainty-equivalent solution gives a large constraint violation (where $\beta^T x^*_{CE} - 1$ exceeds 150). With this realization fixed, we solved $\eqref{prob:opt_ex}$ with $\Delta_1 = 0.1$, $\gamma = 0.05$, $k=30$ with various Wasserstein radii from $\eps = 0$ to $\eps = 0.05$. For each $\eps$ the bootstrap sampling was repeated 20 times. Boxplots for each radius $\eps$ are shown in Figure \ref{fig:tradeoff}. An inherent performance-robustness trade-off is clearly seen: increasing the Wasserstein radius  reduces the objective value but also reduces the amount of constraint violation $\beta^T \hat x_{n,k}^* - 1$. It can also be seen that for large enough radii, the amount of constraint violation is quite small despite the substantial error in the nominal least squares estimate. 


\section{Conclusions}
We considered a data-decision architecture that couples a statistical parameter estimation with a constraint optimization layer. To robustify the interface between layers, we proposed a distributionally robust bootstrap optimization algorithm based on Wasserstein ambiguity sets built directly on the bootstrap samples. Even though it is impossible to sample directly from the exact distribution of the parameter estimate, we showed that it is nevertheless possible to obtain a finite-data out-of-sample safety guarantee by suitably selecting the ambiguity set radius. Furthermore, the resulting distributionally robust optimization problem has an exact reformulation as a finite-dimensional convex program. 


\bibliographystyle{plain}
\bibliography{bibliography.bib}

\section*{Appendix}

Here, we prove the main result, Theorem \ref{thm:mainconcbound}. We first summarize some notation. We utilize the 2-norm in the Wasserstein distance \eqref{eq:wasserstein}. For matrices, $\| \cdot \|$ indicates the spectral norm. We define $F_n$ as the empirical distribution of the true unobserved residual $\ve_n$. The distribution of $\hat{\beta}_n$ based on $F_n$ is denoted by $\Phi(F_n)$. In summary, we have the following distributions in the residual and parameter spaces. 
\begin{center}
\begin{tabular}{ | l | l| l |} 
\hline
distributions & $\ve$ & $\beta$ \\ 
\hline
true residuals $\ve$ & $F$ & $\Phi(F)$ \\ 
unobserved residuals $\ve_n$  & $F_n$ & $\Phi(F_n)$ \\ 
observed residuals $\hat{\ve}_n$  & $\hat{F}_n$ & $\Phi(\hat{F}_n)$ \\ 
bootstrap empirical $\hat \beta^*_k$  & - & $\Phi^*_k(\hat F_n)$ \\ 
\hline
\end{tabular}
\end{center}

We also have the following useful convex concentration bound for $\alpha$-sub-exponential random variables \cite{sambale2020some}.
\begin{thm}[Proposition 1.2 in \cite{sambale2020some}] \label{convexconc}
Let $X_1,...,X_n$ be iid random variables with $\alpha$-sub-exponential tails for any $\alpha > 0$. Let $f: \mathbf{R}^n \rightarrow \mathbf{R}$ be convex and L-Lipschitz. Then for any $t\geq 0$ and some constant $c$
\begin{equation}
    P(| f(X) - \mathbf{E} f(X) | > t ) \leq 2 \exp \left(-\frac{ct^\alpha}{L^\alpha \| X_1 \|_{\Psi_\alpha}^\alpha}\right).
\end{equation}
\end{thm}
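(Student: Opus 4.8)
The plan is to reduce the two-sided tail bound to a bound on the Laplace transform (moment generating function) of $f(X) - \mathbf{E}f(X)$ via the Chernoff inequality, and then to obtain that Laplace bound by tensorizing a \emph{one-dimensional} functional inequality that encodes the $\alpha$-sub-exponential tail assumption. The role of convexity is precisely to convert the single Euclidean Lipschitz constant $L$ into a sum of per-coordinate sensitivities with $\sum_i (\partial_i f)^2 \leq L^2$, which is what keeps the final estimate independent of the dimension $n$. I note at the outset that the argument naturally produces a two-regime (Bernstein-type) bound of the form $2\exp(-c\min(t^2/(L\|X_1\|_{\Psi_\alpha})^2,\, t^\alpha/(L\|X_1\|_{\Psi_\alpha})^\alpha))$; the expression written in the statement is the $\alpha$-homogeneous term of this crossover, which dominates in the relevant deviation regime (the far tail when $\alpha \in (0,2)$, and the small-deviation regime when $\alpha > 2$).

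First I would record the base one-dimensional estimate. Writing $M = \|X_1\|_{\Psi_\alpha}$ and $\mu = \mathcal{L}(X_1 - \mathbf{E}X_1)$, the tail assumption is equivalent (up to $\alpha$-dependent constants, by the standard Markov/integration equivalence) to the moment growth $\|X_1 - \mathbf{E}X_1\|_p \leq C_\alpha M \max(\sqrt p, p^{1/\alpha})$, and in turn to a transport--entropy (equivalently Maurey infimum-convolution) inequality for $\mu$ with a cost function $w_\alpha(u)$ that behaves like $u^2/M^2$ for $|u| \lesssim M$ and like $|u|^\alpha/M^\alpha$ for $|u| \gtrsim M$. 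That such tail decay is equivalent to such a transport inequality is the content of Bobkov--G\"otze and Gozlan-type characterizations; this is the step where the hypothesis on $X_1$ is actually consumed.

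Next I would tensorize. Infimum-convolution / transport-entropy inequalities tensorize over independent coordinates, so the product law $\mu^{\otimes n}$ on $\mathbf{R}^n$ satisfies the same inequality with the additive cost $W(u) = \sum_{i=1}^n w_\alpha(u_i)$. Applying the Kantorovich--Maurey dual form to a convex $L$-Lipschitz $f$ yields $\log \mathbf{E}\exp(\lambda(f(X) - \mathbf{E}f(X))) \leq \sum_{i=1}^n w_\alpha^*(\lambda\, \partial_i f)$, and here convexity is essential: it is what lets the infimum-convolution operator act through the gradient of $f$ and collapse the per-coordinate contributions using $\sum_i(\partial_i f)^2 \leq L^2$. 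Since the Legendre conjugate $w_\alpha^*(s)$ scales like $M^2 s^2$ for small $s$ and like $(M|s|)^{\alpha'}$ for large $s$, with $\alpha' = \alpha/(\alpha-1)$, optimizing the resulting Chernoff bound $\inf_{\lambda>0}[-\lambda t + \sum_i w_\alpha^*(\lambda \partial_i f)]$ over $\lambda$ produces exactly the two-regime exponent above. Applying the same argument to $-f$ (the lower tail) and adding gives the two-sided statement and the leading factor $2$.

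The main obstacle is the base one-dimensional step: establishing the transport/infimum-convolution inequality with the \emph{correct} cost $w_\alpha$ from the $\alpha$-sub-exponential assumption, with constants tracked explicitly in $\alpha$ and $M$. This is delicate because the cost is genuinely two-regime (Gaussian core, $\alpha$-power tails), and for $\alpha < 1$ the Orlicz functional $\|\cdot\|_{\Psi_\alpha}$ is only a quasinorm, so the triangle inequality and several norm manipulations survive only up to $\alpha$-dependent factors -- this is the source of the ``tedious technicalities'' alluded to in the paper's footnote. A secondary but conceptually important point is that convexity cannot be dropped: for general Lipschitz $f$ one would have to control bounded-difference sensitivities rather than the Euclidean gradient, and the estimate would no longer be dimension-free.
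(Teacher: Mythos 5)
First, a point of reference: the paper does not prove this theorem at all --- it is quoted verbatim (with attribution) from Proposition 1.2 of \cite{sambale2020some} and used as a black box --- so your attempt can only be judged against the cited literature rather than an internal argument.

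Your architecture (one-dimensional transport/infimum-convolution inequality, tensorization, Kantorovich--Maurey duality for convex $f$, Chernoff) is a recognized route to convex concentration, but the two load-bearing steps are genuinely broken. The base step, which you yourself identify as ``where the hypothesis on $X_1$ is actually consumed,'' is not a known equivalence: the Bobkov--G\"otze and Gozlan characterizations relate transport--entropy inequalities to dimension-free \emph{concentration}, not to raw tail decay, and a one-dimensional tail bound does not by itself yield a strong transport inequality with matching cost (e.g., $T_2$ is strictly stronger than Gaussian integrability of the law). Worse, for $\alpha<1$ your cost $w_\alpha$ is concave at infinity, so the classical property-$(\tau)$ duality and tensorization that your second step relies on are simply unavailable; one would need the weak/convex transport framework of Gozlan--Roberto--Samson--Tetali, which you never invoke. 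So the hardest step is asserted, not proved. A smaller slip of the same kind: the lower tail cannot be obtained by ``applying the same argument to $-f$,'' since $-f$ is concave; in the convex-$(\tau)$ setting two-sidedness comes from a median/two-factor argument.

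Second, and decisively for this paper, your closing reconciliation fails in exactly the regime the paper works in ($\alpha>2$, Assumption \ref{assm:noise_dist_asubexp}). The collapse $\sum_i w_\alpha^*(\lambda\,\partial_i f)\lesssim w_\alpha^*(\lambda L)$ requires $\ell_2\hookrightarrow\ell_{\alpha'}$, i.e.\ $\alpha'=\alpha/(\alpha-1)\ge 2$, i.e.\ $\alpha\le 2$; for $\alpha>2$ one only gets $\sum_i|\partial_i f|^{\alpha'}\le n^{1-\alpha'/2}L^{\alpha'}$ and the dimension cannot be removed. This is not a technicality: the statement as printed (all $t\ge 0$, constant independent of $n$) is \emph{false} for $\alpha>2$. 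Take $X_i$ Rademacher (so $\|X_1\|_{\Psi_\alpha}=O(1)$ for every $\alpha$) and $f(x)=n^{-1/2}\sum_i x_i$, which is convex and $1$-Lipschitz; then $f(X)\Rightarrow N(0,1)$, whose tail $\approx e^{-t^2/2}$ eventually exceeds $2e^{-ct^\alpha}$ for any fixed $c$ once $\alpha>2$ and $t$ is large. So your remark that the $\alpha$-homogeneous term ``dominates in the small-deviation regime when $\alpha>2$'' cannot be upgraded to the claimed bound for all $t$: for $\alpha>2$ the true content is the Bernstein form $\min(t^2,t^\alpha)$, equivalently the printed form restricted to $t\lesssim L\|X_1\|_{\Psi_\alpha}$ (which happens to be the regime in which Lemma \ref{betabound} and Proposition \ref{prop:conc1} actually use it, and it also means the blanket ``for any $\alpha>0$'' in the transcription above cannot be taken literally). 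For $\alpha\le 2$, your absorption of the Gaussian regime into $e^{-c't^\alpha}$ by shrinking the constant is legitimate, but you only gesture at it; that small-$t$/mid-$t$ case analysis should be written out.
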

Now towards proving Theorem \ref{thm:mainconcbound}, from the triangle inequality, we have
\begin{equation} \label{eq:tri1}
    d_1(\Phi(F), \Phi_k^*(\hat F_n)) \leq d_1(\Phi(F), \Phi(\hat F_n)) + d_1(\Phi(\hat F_n), \Phi_k^*(\hat F_n))
\end{equation} 
The second term on the right is the distance between the exact distribution of the bootstrap estimate $\hat \beta^*_n$ and an empirical distribution function on $k$ independent bootstrap samples. The following shows that the norm of the bootstrap estimate $\| \hat \beta^*_n \|$ has the same tail property as the components of the residual vector quantified by Assumption \ref{assm:noise_dist_asubexp}.
\begin{lem} \label{betabound}
Consider the random vector $\hat \beta^*_n \sim \Phi(\hat F_n)$. Under Assumption \ref{assm:noise_dist_asubexp}, there is a constant $\eta$ such that $\mathbf{E}\exp(\eta \| \hat \beta^*_n \|^\alpha ) < \infty$.
\end{lem}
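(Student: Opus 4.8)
The plan is to condition on the observed regression data, so that the centered empirical residual distribution $\hat F_n$ and the least-squares estimate $\hat \beta_n$ are fixed and the only remaining randomness is the bootstrap resampling. Writing $M = (X_n^T X_n)^{-1} X_n^T$, the bootstrap estimate admits the affine representation $\hat \beta_n^* = \hat \beta_n + M \varepsilon_n^*$, where the components of $\varepsilon_n^* \in \mathbf{R}^n$ are drawn iid from $\hat F_n$. I would then view $\| \hat \beta_n^* \|$ as a function of the random vector $\varepsilon_n^*$ and apply the convex concentration inequality of Theorem \ref{convexconc}.

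The two hypotheses of Theorem \ref{convexconc} are verified as follows. First, the map $v \mapsto \| \hat \beta_n + M v \|$ is the composition of an affine map with a norm, hence convex, and it is Lipschitz with constant $\| M \|$ in the Euclidean norm, since $| \, \| \hat \beta_n + M v_1 \| - \| \hat \beta_n + M v_2 \| \, | \leq \| M (v_1 - v_2) \| \leq \| M \| \, \| v_1 - v_2 \|$; note $\| M \|$ is exactly the quantity $L$ appearing in Theorem \ref{thm:mainconcbound}. Second, conditional on the data the components of $\varepsilon_n^*$ are iid and supported on the finite set of centered residuals, hence bounded and therefore $\alpha$-sub-exponential with a finite Orlicz norm $\| \varepsilon_{n1}^* \|_{\Psi_\alpha}$.

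Applying Theorem \ref{convexconc} then gives that $\| \hat \beta_n^* \| - \mathbf{E}_* \| \hat \beta_n^* \|$ is $\alpha$-sub-exponential with Orlicz norm bounded by a constant multiple of $\| M \| \, \| \varepsilon_{n1}^* \|_{\Psi_\alpha}$, where $\mathbf{E}_*$ denotes expectation over the bootstrap draw. Since $\mathbf{E}_* \hat \beta_n^* = \hat \beta_n$ and $\mathbf{E}_* \| \hat \beta_n^* \| \leq (\mathbf{E}_* \| \hat \beta_n^* \|^2)^{1/2} < \infty$, adding back this finite centering constant preserves the $\alpha$-sub-exponential tail. Equivalently, $\| \hat \beta_n^* \|$ has finite Orlicz norm of order $\alpha$, which is exactly the assertion $\mathbf{E} \exp(\eta \| \hat \beta_n^* \|^\alpha) < \infty$ for $\eta$ taken as the inverse $\alpha$-th power of that Orlicz norm.

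The main obstacle is not the existence argument above, which conditional on the data is essentially immediate, but obtaining a constant $\eta$ (equivalently an Orlicz-norm bound on $\hat \beta_n^*$) that is clean enough to be fed into the Fournier--Guillin concentration estimate producing $\epsilon_1$. This requires controlling $\| \varepsilon_{n1}^* \|_{\Psi_\alpha}$, the Orlicz norm of the bootstrap residuals drawn from the data-dependent $\hat F_n$, in terms of the fixed tail constant $\| \varepsilon_{n1} \|_{\Psi_\alpha}$ of the true residuals. Since the estimated residuals satisfy $\hat \varepsilon_n = (I - H) \varepsilon_n$ with $H = X_n M$ the projection (hat) matrix, this amounts to showing that the empirical residuals inherit, with high probability, the sub-exponential tail of the true residuals; I expect this propagation of constants, rather than the qualitative tail claim itself, to be the delicate part, and it is precisely where the deviation terms $\epsilon_2$ and $\epsilon_3$ of Theorem \ref{thm:mainconcbound} originate.
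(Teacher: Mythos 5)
Your proposal conditions on the data, and that choice is exactly where it falls short of the lemma. Conditional on the data, $\hat F_n$ has finite support, so $\|\hat\beta^*_n\| \leq \|\hat\beta_n\| + \|M\|\,\|\varepsilon^*_n\|$ is bounded and the conditional exponential moment is finite for \emph{every} $\eta$ --- no concentration inequality needed, as you yourself observe. But the constant you obtain (equivalently, the bound on the conditional moment, or the Orlicz norm $\|\varepsilon^*_{n1}\|_{\Psi_\alpha}$) is a random, data-dependent quantity. For the lemma to do its job --- feeding Theorem \ref{thm:wassconc1}, whose constants $c_1, c_2$ must be deterministic so that $\epsilon_1$ can be computed from $\delta$, $k$, $\alpha$, $p$ alone --- one needs $\eta$ and the moment bound to be determined by Assumption \ref{assm:noise_dist_asubexp}, not by the realized residuals. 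You correctly diagnose this ``propagation of constants'' as the crux, but you explicitly leave it open; since that deferred step \emph{is} the content of the lemma, the proposal as written does not prove it.

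The paper closes the gap by refusing to condition: it substitutes $\hat\beta_n = \beta + (X_n^TX_n)^{-1}X_n^T\varepsilon_n$ into $y^*_n = X_n\hat\beta_n + \varepsilon^*_n$ to get $\hat\beta^*_n = \beta + (X_n^TX_n)^{-1}X_n^T(\varepsilon_n + \varepsilon^*_n)$, so that $\|\hat\beta^*_n\|$ is a convex, $L$-Lipschitz function of the \emph{joint} noise vector $\varepsilon_n + \varepsilon^*_n$, whose $\alpha$-sub-exponential tail constants trace back to $F$ itself (the unconditional tails of $\varepsilon^*_{ni}$ are controlled because each is a uniform draw from the centered residuals, which are linear images of $\varepsilon_n$). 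Theorem \ref{convexconc} applied to this unconditional randomness then yields a deterministic $\eta$ in one step --- this substitution is the idea your argument is missing. Two further corrections: your closing claim that the propagation issue is ``where $\epsilon_2$ and $\epsilon_3$ originate'' is wrong --- those terms come from the triangle inequalities \eqref{eq:tri1} and \eqref{eq:tri2}, i.e.\ from bounding $d_2(F, F_n)$ (Theorem \ref{thm:wassconc2}) and $d_2(F_n, \hat F_n)$ (Proposition \ref{prop:conc1}), and are unrelated to Orlicz-norm control of the bootstrap residuals; and your statement that $(I-H)\varepsilon_n$ inherits sub-exponential tails ``with high probability'' conflates a deterministic tail-constant bound with a probabilistic event --- what is needed (and what the paper's route delivers) is the former.
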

\begin{proof}
By definition, we have
\begin{equation}
    \hat \beta^*_n = (X_n^T X_n)^{-1}X_n^T y^*_n,
\end{equation}
where $y^*_n = X_n \hat \beta_n + \ve_n^*$ and $\ve_n^*$ has components iid from $\hat F_n$ (i.e., resampled with replacement from the centered residuals $\hat \ve_n$). Substituting $y^*_n$ and $\hat \beta_n$ gives
$$ \hat \beta^*_n = \beta + (X_n^T X_n)^{-1}X_n^T(\ve_n + \ve_n^*).$$
Thus, $\hat \beta^*_n$ is an affine transformation of $\ve_n + \ve_n^*$, a random vector with iid $\alpha$-sub-exponential components ($\ve_n$ iid from $F$, $\ve_n^*$ iid from $\hat F_n$) and therefore also has $\alpha$-sub-exponential components. The function $\| \hat \beta^*_n \|$ is convex in $\ve_n + \ve_n^*$, since it is the composition of a norm with an affine function, and it has Lipschitz constant $\|(X_n^T X_n)^{-1}X_n^T \|$. The result then follows from Theorem \ref{convexconc}, noting that centering does not affect the tail properties.
\end{proof}

Lemma \ref{betabound} allows to apply finite sample bounds from \cite{fournier2015rate}.
\begin{thm} [Wasserstein concentration for bootstrap samples \cite{fournier2015rate}] \label{thm:wassconc1}
Consider the distribution function $\Phi(\hat F_n) \in \mathcal{M}(\mathbf{R}^p)$ and corresponding empirical distribution function $\Phi_k^*(\hat F_n) \in \mathcal{M}(\mathbf{R}^p)$. Under Assumption \ref{assm:noise_dist_asubexp}, for some positive constants $c_1, c_2$ we have 
\begin{equation}
    P ( d_1(\Phi(\hat F_n), \Phi_k^*(\hat F_n)) \geq t ) \leq \begin{cases} c_1 \exp(-c_2 k t^{\max(p,2)} ),  t \leq 1 \\ c_1 \exp(-c_2 k t^\alpha),  t > 1 \end{cases}
\end{equation}
\end{thm}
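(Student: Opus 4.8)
The plan is to recognize this as a direct instance of the empirical-measure concentration theorem of Fournier and Guillin \cite{fournier2015rate}, so that the entire argument reduces to verifying that its hypotheses are met. First I would fix the regression data $(y_n, X_n)$ and argue conditionally: given the data, each bootstrap resample $\ve_n^{*i}$ has components drawn iid from $\hat F_n$, and the corresponding estimate $\hat \beta_n^{*i} = (X_n^T X_n)^{-1} X_n^T y_n^{*i}$ is an affine image of $\ve_n^{*i}$. Hence, conditionally on the data, the vectors $\hat \beta_n^{*1}, \dots, \hat \beta_n^{*k}$ are iid with common law $\Phi(\hat F_n) \in \mathcal{M}(\mathbf{R}^p)$, and $\Phi_k^*(\hat F_n)$ is precisely the empirical measure of these $k$ iid samples. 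The quantity $d_1(\Phi(\hat F_n), \Phi_k^*(\hat F_n))$ is therefore the $1$-Wasserstein distance between a fixed probability law on $\mathbf{R}^p$ and its $k$-sample empirical measure --- exactly the object controlled in \cite{fournier2015rate}.

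Next I would check the single nontrivial hypothesis of that theorem, namely a finite exponential moment of the sampled law: one needs $\int_{\mathbf{R}^p} \exp(\eta \|z\|^\alpha) \, \Phi(\hat F_n)(dz) < \infty$ for some $\eta > 0$. This is exactly the content of Lemma \ref{betabound}, which already supplies such an $\eta$ for $\hat \beta_n^* \sim \Phi(\hat F_n)$ under Assumption \ref{assm:noise_dist_asubexp}. With this moment verified, I would invoke the Fournier--Guillin bound for dimension $d = p$ and Wasserstein order $1$: for $\alpha > 2$ their result produces a small-deviation regime with rate exponent $\max(p,2)$ (when the distance is at most $1$) and a large-deviation regime with exponent $\alpha$ (when it exceeds $1$), yielding constants $c_1, c_2 > 0$ and the two-case bound stated in the theorem.

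The points demanding care, rather than the structure of the argument, are the following. The constants $c_1, c_2$ inherited from \cite{fournier2015rate} depend on the moment parameters $(\alpha, \eta)$, and $\eta$ in turn depends on the Lipschitz constant $L = \|(X_n^T X_n)^{-1} X_n^T \|$ through Lemma \ref{betabound}; I would therefore state the constants as depending on $\alpha$ and $p$ (and implicitly the design), and note that all probabilities are taken conditionally on $(y_n, X_n)$, the regime under which the bootstrap draws are genuinely iid. The main obstacle is the boundary dimension $p = 2$: there the Fournier--Guillin rate carries an extra logarithmic factor, which is precisely why the exponent is written as $\max(p,2)$ and why the exposition restricts to $\alpha > 2$ (cf.\ the footnote to Assumption \ref{assm:noise_dist_asubexp}); this factor is absorbed into $c_1, c_2$ at the cost of slightly weaker constants. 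Beyond this, the proof is a verification-and-citation, with Lemma \ref{betabound} doing the real work of connecting the sub-exponential residual tails to the moment hypothesis that \cite{fournier2015rate} requires.
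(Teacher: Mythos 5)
Your proposal is correct and follows essentially the same route as the paper, which likewise treats this theorem as a citation of Fournier--Guillin with Lemma \ref{betabound} supplying the required exponential moment condition for $\Phi(\hat F_n)$. If anything, you are more explicit than the paper on two points it glosses over --- that the bound is conditional on $(y_n, X_n)$ (so the bootstrap draws are genuinely iid and the constants inherit a design dependence), and the logarithmic correction in the boundary case $p=2$ --- though your claim that the log factor can be absorbed into the constants uniformly over $t \leq 1$ deserves a caveat, since $\log(2+1/t)$ diverges as $t \to 0$.
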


Now we turn to bounding $d_1(\Phi_n(F), \Phi_n(\hat{F}_n))$.
\begin{lem}
For the Wasserstein distance induced by the Euclidean norm, we have
\begin{equation} \label{eq:bound1}
  d_1(\Phi(F), \Phi(\hat{F}_n)) \leq \sqrt n \| (X_n^T X_n)^{-1} X_n^T \| d_2(F,\hat F_n)
\end{equation}
\end{lem}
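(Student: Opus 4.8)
The plan is to exploit the fact that both $\Phi(F)$ and $\Phi(\hat F_n)$ are images of $n$-fold product measures under one and the same linear map, so that the discrepancy in parameter space is controlled by the discrepancy in the one-dimensional residual space, amplified only through the geometry of the design matrix. Writing $A = (X_n^T X_n)^{-1} X_n^T$, the least-squares estimate under residual law $F$ and the bootstrap estimate under $\hat F_n$ are both of the form (deterministic center) $+\, A\varepsilon$, where $\varepsilon$ is a vector of $n$ iid draws from $F$ and from $\hat F_n$ respectively. Thus $\Phi(F) = T_\#(F^{\otimes n})$ and $\Phi(\hat F_n) = T_\#(\hat F_n^{\otimes n})$ for the common linear map $T(\varepsilon) = A\varepsilon$, the additive centering being common to both and hence not contributing to the transport cost in this step.

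First I would invoke the contraction of Wasserstein distance under Lipschitz maps: for any $L$-Lipschitz $T$ and any order $q$, pushing an (almost) optimal coupling forward by $T \times T$ yields a valid coupling of the images whose cost is inflated by at most $L^q$, so $d_q(T_\#\mu, T_\#\nu) \le L\, d_q(\mu,\nu)$. Since $T$ is linear with Lipschitz constant equal to the spectral norm $\|A\| = L$ in the Euclidean norm, this gives $d_2(\Phi(F), \Phi(\hat F_n)) \le \|A\|\, d_2(F^{\otimes n}, \hat F_n^{\otimes n})$.

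Second I would tensorize: for product measures under the Euclidean cost, taking the product of the optimal one-dimensional couplings of $F$ and $\hat F_n$ in each coordinate and using additivity of $\|\cdot\|_2^2$ across coordinates yields $d_2(F^{\otimes n}, \hat F_n^{\otimes n})^2 \le n\, d_2(F, \hat F_n)^2$, i.e.\ the factor $\sqrt n$. Finally, since the $q$-Wasserstein distances are nondecreasing in $q$ (Jensen/H\"older applied inside any coupling gives $d_1 \le d_2$), I would replace the left-hand $d_2$ by $d_1$ to obtain the claimed bound.

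The step I expect to be the crux is the tensorization, which produces the $\sqrt n$ and genuinely requires working with $q = 2$ and the Euclidean norm: the analogous clean bound is unavailable for $d_1$ under the Euclidean norm, and this is precisely why the statement keeps $d_1$ on the left (obtained for free by monotonicity) while exposing $d_2(F,\hat F_n)$ on the right. A secondary point to handle carefully is confirming that the two laws share the same linear part $A$, so that the contraction step leaves no residual translation term; the mismatch between the centers $\beta$ and $\hat\beta_n$ does not enter here and is absorbed into the remaining terms of the overall bound.
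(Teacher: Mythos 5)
Your proof is correct and is essentially the paper's own argument: both rest on the Lipschitz contraction $d_q(A\mu, A\nu) \leq \|A\|\, d_q(\mu,\nu)$, tensorization of the squared Euclidean cost across the $n$ iid coordinates (producing the $\sqrt{n}$), and Jensen's inequality to relate $d_1$ and $d_2$, with both treatments likewise requiring the centered-law convention for $\Phi$ so that no translation term appears. The only difference is the order of composition --- you apply $d_1 \le d_2$ in parameter space and then contract and tensorize at $q=2$, whereas the paper contracts at $q=1$ and then merges Jensen with the tensorization step --- which is an immaterial rearrangement of the same ingredients.
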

\begin{proof}
First, let $U$ and $V$ be random variables in $\mathbf{R}^d$ with iid components, and consider the linear transformations $A U$ and $A V$ for a matrix $A$. We abuse notation by writing using the random variables in the arguments of the Wasserstein distance as opposed to their corresponding distribution functions. From \cite{bickel1981some} Section 8 equation (8.3) we have for any $q \geq 1$
\begin{equation}
    d_q(A U, A V) \leq \| A \| d_q(U, V).
\end{equation}
Furthermore, for $q=1$ with the Euclidean norm, we have
\begin{align}
 d_1(U, V) &\leq \mathbf{E}[\sum_{i=1}^n (u_i - v_i)^2]^{1/2} \\
    &\leq [\sum_{i=1}^n \mathbf{E} (u_i - v_i)^2]^{1/2} \\
    &\leq [n \mathbf{E} (u_i - v_i)^2]^{1/2} = \sqrt n d_2(u_i,v_i),
\end{align}
where the second line follows from Jensen's inequality and linearity of expectation.
Putting the above together for $d_1(\Phi(F), \Phi(\hat{F}_n))$ yields the result.
\end{proof}

We now need a bound for $d_2(F,\hat F_n)$. Using the triangle inequality, we have
\begin{equation} \label{eq:tri2}
    d_2(F,\hat F_n) \leq d_2(F, F_n) + d_2(F_n,\hat F_n).
\end{equation}
The first term on the RHS is the distance between an exact distribution and empirical distribution, so the results of \cite{fournier2015rate} can again be applied to obtain a finite sample bound. In particular, we have
\begin{thm}[Wasserstein concentration for residual components \cite{fournier2015rate}] \label{thm:wassconc2}
Consider the distribution function $F \in \mathcal{M}(\mathbf{R})$ and the corresponding empirical distribution function on the unobserved residuals $F_n \in \mathcal{M}(\mathbf{R})$. Under Assumption \ref{assm:noise_dist_asubexp}, for some positive constants $c_3, c_4$ we have 
\begin{equation}
    P( d_2(F, F_n)^2 \geq t) \leq  \begin{cases} c_3 \exp(-c_4 n t^2 ) \quad t \leq 1 \\ c_3 \exp(-c_4 n t^{\alpha}) \quad t > 1, \end{cases}
\end{equation}
or equivalently,
\begin{equation}
    P( d_2(F, F_n) \geq t) \leq  \begin{cases} c_3 \exp(-c_4 n t ) \quad t \leq 1 \\ c_3 \exp(-c_4 n t^{\alpha/2}) \quad t > 1. \end{cases}
\end{equation}
\end{thm}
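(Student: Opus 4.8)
The plan is to read the claim off from the empirical-measure concentration inequality of \cite{fournier2015rate}, exactly as was done for Theorem~\ref{thm:wassconc1}, but now in the scalar setting. The first step is to recognize the object: by definition $F_n$ is the empirical law of the $n$ i.i.d.\ scalar samples $\varepsilon_{n1},\dots,\varepsilon_{nn}$ drawn from $F$, so $d_2(F,F_n)$ is precisely the $2$-Wasserstein distance between a probability measure on $\mathbf{R}$ and its empirical measure built from $n$ independent draws. No approximation or bootstrapping enters at this stage, so the statement is purely about convergence of an empirical measure, with ambient dimension $d=1$ and Wasserstein order $q=2$ (as opposed to $q=1$ in dimension $p$, used for Theorem~\ref{thm:wassconc1}).

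The second step is to check the single hypothesis needed by \cite{fournier2015rate}, namely a finite exponential moment of the form $\mathbf{E}\exp(\gamma|\varepsilon_{ni}|^\alpha)<\infty$ for some $\alpha,\gamma>0$. This is exactly the equivalent form of the tail condition recorded in Assumption~\ref{assm:noise_dist_asubexp}, and because we assume $\alpha>2$ the exponents produced by \cite{fournier2015rate} are the clean ones. With the hypothesis verified, I would invoke the concentration estimate of \cite{fournier2015rate} for $W_2$ in dimension one. Since $d=1<2q=4$, the typical size of the distance is the dimension-free $n^{-1/2}$; the small-deviation (bulk) exponent is $\max(d,2q)=4$, and beyond the scale $s=1$ the bound crosses over to a tail-dominated regime whose exponent is dictated by the sub-exponential index $\alpha$ of $F$. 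Concretely this produces constants $c_3,c_4>0$ and a two-regime bound $P(d_2(F,F_n)\ge s)\le c_3\exp(-c_4 n s^{4})$ for $s\le 1$, together with a tail bound for $s>1$.

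The third step is a change of variables to pass between the distance and the squared distance. Because $t\mapsto\sqrt{t}$ is increasing, $\{d_2(F,F_n)^2\ge t\}=\{d_2(F,F_n)\ge\sqrt{t}\}$ and the threshold $s=1$ corresponds to $t=1$, so the case split is preserved. Substituting $s=\sqrt{t}$ into the bulk exponent turns $s^{4}$ into $t^{2}$, which is the squared-distance form displayed in the statement; the same substitution converts the tail exponent, and renaming constants lets a single pair $(c_3,c_4)$ serve both displays.

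I expect the main difficulty to be bookkeeping rather than conceptual. One must import the regime- and dimension-dependent exponents from \cite{fournier2015rate} for the order-$2$ distance in dimension one correctly (in particular getting the bulk exponent $\max(d,2q)=4$, which is what yields the $t^{2}$ in the squared form), and then track these exponents faithfully through the square-root change of variables so that the two displayed inequalities and their common threshold at $1$ are mutually consistent. A secondary point to confirm is that the cited concentration bound holds for every sample size $n\ge 1$, with any small-$n$ correction absorbed into the constants, since the theorem is applied for arbitrary $n$.
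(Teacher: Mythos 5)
Your route is the same as the paper's: the theorem is not proved from scratch but imported from the empirical-measure concentration result of \cite{fournier2015rate} (their Theorem 2), applied with ambient dimension $d=1$, Wasserstein order $2$, and the exponential-moment hypothesis supplied by Assumption~\ref{assm:noise_dist_asubexp} (which needs $\alpha>2$, i.e., $\alpha$ exceeding the Wasserstein order). Your identification of the setting and of the hypothesis to verify is correct, and the bounds you extract in step 2 are indeed what Fournier--Guillin give: their inequality is stated for the \emph{squared} distance $\mathcal{T}_2=d_2^2$, and with $d=1<4$ it reads $P(d_2(F,F_n)^2\geq t)\leq Ce^{-cnt^{2}}$ for $t\leq 1$ and $\leq Ce^{-cnt^{\alpha/2}}$ for $t>1$; equivalently, for the distance itself, $P(d_2(F,F_n)\geq s)\leq Ce^{-cns^{4}}$ for $s\leq 1$ and $\leq Ce^{-cns^{\alpha}}$ for $s>1$.

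The gap is in your step 3, where you assert that the change of variables ``reproduces the displays in the statement'' and that ``renaming constants lets a single pair $(c_3,c_4)$ serve both displays.'' It does not, and they cannot. What you derived matches the statement only in the bulk regime of the squared-distance display: the statement's squared-distance tail is $t^{\alpha}$ (you get $t^{\alpha/2}$), and the statement's unsquared display has bulk exponent $t$ and tail $t^{\alpha/2}$ (you get $t^{4}$ and $t^{\alpha}$). These differ by powers of $t$, not by constants: for fixed $t\leq 1$ and $n\to\infty$, $e^{-cnt^{4}}$ is vastly larger than $e^{-c'nt}$, so the claimed bound $c_3e^{-c_4nt}$ is strictly stronger than anything Theorem 2 of \cite{fournier2015rate} yields. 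Indeed the statement's two displays are not even equivalent to each other: passing from the squared to the unsquared form requires substituting $t^2$ into the exponents, whereas the statement's second display results from substituting $\sqrt{t}$. This is not pedantry, because the second display's bulk bound $c_3e^{-c_4nt}$ is exactly what is inverted to produce the $\epsilon_3\propto \ln(3c_3\delta^{-1})/(c_4 n)$ term in Theorem~\ref{thm:mainconcbound}; with the bound your derivation actually supports, inversion gives the much slower radius $\bigl(\ln(3c_3\delta^{-1})/(c_4 n)\bigr)^{1/4}$. So a faithful completion of your argument would either restate the theorem with the exponents you derived (and propagate the weaker rate into $\epsilon_3$), or invoke a genuinely one-dimensional sharpening beyond \cite{fournier2015rate} (e.g., Bobkov--Ledoux-type results for $W_2$ on $\mathbf{R}$) that could support an $e^{-cnt}$ bulk bound; declaring the displays to agree after ``renaming constants'' is the step that fails.
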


Finally, we turn to bounding $d_2(F_n,\hat F_n)$. 
\begin{lem} \label{lem:dineq}
We have
$d_2(F_n, \hat F_n) \leq \frac{1}{\sqrt n} \| (\frac{1}{n} \mathbf{1} \mathbf{1}^T + \Pi)^{1/2} \varepsilon_n \|_2$, where $\Pi = X_n(X_n^TX_n)^{-1}X_n^T \in \mathbf{R}^{n \times n}$ is the projection matrix onto the column space of $X_n$.
\end{lem}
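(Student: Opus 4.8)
The plan is to reduce the Wasserstein distance to an explicit finite-dimensional vector inequality and then exploit the projection structure. First I would identify the support points of the two empirical measures. Since $\hat{\beta}_n = (X_n^T X_n)^{-1}X_n^T y_n$, the estimated residuals satisfy $\hat{\varepsilon}_n = y_n - X_n \hat{\beta}_n = (I - \Pi)y_n$, and because $(I-\Pi)X_n = 0$ the deterministic part $X_n\beta$ drops out, giving the key identity $\hat{\varepsilon}_n = (I-\Pi)\varepsilon_n$. Writing $J := \frac{1}{n}\mathbf{1}\mathbf{1}^T$ for the averaging matrix (itself a symmetric projection onto the constants), the centered estimated residuals that support $\hat{F}_n$ are exactly the components of $(I-J)\hat{\varepsilon}_n = (I-J)(I-\Pi)\varepsilon_n$, whereas $F_n$ is supported on the components of $\varepsilon_n$.

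Next I would bound $d_2(F_n, \hat{F}_n)$ using a suboptimal but convenient coupling. Both are uniform $n$-point measures on $\mathbf{R}$, so pairing the $i$-th atom of $F_n$ with the $i$-th atom of $\hat{F}_n$ is an admissible transport plan (equal cardinality and equal uniform weights), yielding $d_2(F_n,\hat{F}_n)^2 \le \frac{1}{n}\|\varepsilon_n - (I-J)(I-\Pi)\varepsilon_n\|_2^2$. A short computation gives $I - (I-J)(I-\Pi) = \Pi + J - J\Pi$, hence $\varepsilon_n - (I-J)(I-\Pi)\varepsilon_n = \Pi\varepsilon_n + J(I-\Pi)\varepsilon_n$. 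It then remains to prove the matrix-weighted inequality $\|\Pi\varepsilon_n + J(I-\Pi)\varepsilon_n\|_2^2 \le \varepsilon_n^T(J+\Pi)\varepsilon_n$.

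For this final step I would use the orthogonal splitting $\varepsilon_n = u + w$ with $u := \Pi\varepsilon_n$ and $w := (I-\Pi)\varepsilon_n$, so that $u \perp w$. The left-hand side becomes $\|u + Jw\|_2^2 = \|u\|_2^2 + 2u^T J w + w^T J w$, where I used $J^2 = J$ to simplify $\|Jw\|_2^2 = w^T J w$. The right-hand side expands, using $\Pi u = u$ and $\Pi w = 0$, to $\varepsilon_n^T\Pi\varepsilon_n + \varepsilon_n^T J\varepsilon_n = \|u\|_2^2 + \big(u^T J u + 2u^T J w + w^T J w\big)$. The difference of right minus left is therefore $u^T J u = \varepsilon_n^T \Pi J \Pi \varepsilon_n \ge 0$ because $J$ is positive semidefinite, which establishes the bound and hence $d_2(F_n,\hat{F}_n) \le \frac{1}{\sqrt{n}}\|(J+\Pi)^{1/2}\varepsilon_n\|_2$, as claimed.

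I expect the only genuinely delicate point to be this last algebraic inequality: the cross terms $2u^T J w$ must cancel exactly (which they do precisely because $J$ is idempotent), and the leftover term $u^T J u$ has the correct sign only because $J$ is positive semidefinite. I would also take care to justify the index-pairing coupling cleanly, since that is the step converting the Wasserstein infimum into the sum of squared componentwise differences; everything after that is deterministic linear algebra on the projection matrices $\Pi$ and $J$.
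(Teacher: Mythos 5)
Your proof is correct, and it takes a route that is self-contained where the paper's is not. The paper's proof outsources the key step to Freedman's equations (2.3)--(2.4), which split the distance into a mean-shift term plus paired squared differences, $d_2(F_n,\hat F_n)^2 \leq \left(\frac{1}{n}\mathbf{1}^T\varepsilon_n\right)^2 + \frac{1}{n}\|\hat\varepsilon_n - \varepsilon_n\|_2^2$, then uses $\hat\varepsilon_n - \varepsilon_n = -\Pi\varepsilon_n$ (the paper writes $+\Pi\varepsilon_n$, an immaterial sign slip since only the norm enters) so that the two terms sum \emph{exactly} to $\frac{1}{n}\varepsilon_n^T(J+\Pi)\varepsilon_n$. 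You instead build the identity coupling directly between the raw atoms $\varepsilon_n$ and the centered estimated residuals $(I-J)(I-\Pi)\varepsilon_n$, which yields the intermediate bound $\frac{1}{n}\|(\Pi + J(I-\Pi))\varepsilon_n\|_2^2$, and then you must prove by hand that this is at most $\frac{1}{n}\varepsilon_n^T(J+\Pi)\varepsilon_n$ — your orthogonal splitting $u = \Pi\varepsilon_n$, $w = (I-\Pi)\varepsilon_n$ with the cross-term cancellation and leftover $u^TJu \geq 0$ does exactly that, and all of the algebra checks out. What your approach buys is independence from Freedman's lemmas (the centering is handled explicitly inside the coupling rather than via his mean/centered decomposition), and in fact your intermediate coupling bound is slightly tighter than the paper's, by the slack $\frac{1}{n}\varepsilon_n^T\Pi J\Pi\varepsilon_n$, before both relax to the same final quadratic form. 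What the paper's approach buys is brevity: the last step there is an identity rather than an inequality, and the coupling/centering bookkeeping is hidden in the citation.
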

\begin{proof}
Using equations (2.3) and (2.4) in \cite{freedman1981bootstrapping} gives
\begin{equation}
    d_2(F_n, \hat F_n)^2 \leq \left(\frac{1}{n}\mathbf{1}^T \varepsilon_n\right)^2 + \frac{1}{n}\| \hat \varepsilon_n - \varepsilon_n \|_2^2.
\end{equation}
It is easily verified that $\hat{\ve}_n - \ve_n = \Pi \ve_n$, where $\Pi = X_n(X_n^TX_n)^{-1}X_n^T \in \mathbf{R}^{n \times n}$. Combining the terms and taking the square root yields the result.
\end{proof}

\begin{lem} \label{lem:dineq2}
We have $$\mathbf{E} d_2(F_n, \hat F_n) \leq \mathbf{E} \frac{1}{\sqrt n} \| (\frac{1}{n} \mathbf{1} \mathbf{1}^T + \Pi)^{1/2} \varepsilon_n \|_2 \leq \sigma \sqrt{\frac{p+1}{n}}.$$
\end{lem}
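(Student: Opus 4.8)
The plan is to establish the two inequalities in turn. The first inequality is immediate: it follows by taking expectations of both sides of the bound in Lemma \ref{lem:dineq}, since $d_2(F_n,\hat F_n) \leq \frac{1}{\sqrt n}\|(\frac{1}{n}\mathbf{1}\mathbf{1}^T + \Pi)^{1/2}\varepsilon_n\|_2$ holds pointwise in $\varepsilon_n$ and expectation is monotone. All the real work is therefore in the second inequality.

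For the second inequality, I would abbreviate $M := \frac{1}{n}\mathbf{1}\mathbf{1}^T + \Pi$, which is symmetric and positive semidefinite as a sum of two PSD matrices, so that $M^{1/2}$ is well defined and $(M^{1/2})^T M^{1/2} = M$. The key move is to pass from the norm to its square and recognize a quadratic form, $\|M^{1/2}\varepsilon_n\|_2^2 = \varepsilon_n^T M \varepsilon_n$. Taking expectations and using the cyclic trace identity together with Assumption \ref{assm:noise_dist_asubexp} (iid, mean zero, variance $\sigma^2$, so that $\mathbf{E}[\varepsilon_n \varepsilon_n^T] = \sigma^2 I_n$), I obtain $\mathbf{E}\|M^{1/2}\varepsilon_n\|_2^2 = \tr(M\,\mathbf{E}[\varepsilon_n\varepsilon_n^T]) = \sigma^2 \tr(M)$.

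It then remains to compute $\tr(M)$. Here $\tr(\frac{1}{n}\mathbf{1}\mathbf{1}^T) = \frac{1}{n}\mathbf{1}^T\mathbf{1} = 1$, and $\tr(\Pi) = p$ because $\Pi$ is the orthogonal projector onto the $p$-dimensional column space of $X_n$ (using that $X_n$ has full column rank, as is implicit in the appearance of $(X_n^TX_n)^{-1}$), so that its trace equals its rank. Hence $\tr(M) = p+1$ and $\mathbf{E}\|M^{1/2}\varepsilon_n\|_2^2 = \sigma^2(p+1)$.

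Finally, I would apply Jensen's inequality to the concave square-root function (equivalently, $\mathbf{E} Y \leq \sqrt{\mathbf{E} Y^2}$ for the nonnegative random variable $Y = \|M^{1/2}\varepsilon_n\|_2$) to get $\mathbf{E}\|M^{1/2}\varepsilon_n\|_2 \leq \sigma\sqrt{p+1}$, and divide through by $\sqrt n$ to conclude. I do not anticipate any serious obstacle: the only steps requiring care are the clean cancellation of cross terms in the quadratic form, which relies on the components of $\varepsilon_n$ being uncorrelated with common variance $\sigma^2$, and the identification $\tr(\Pi) = p$ via full column rank of the design matrix.
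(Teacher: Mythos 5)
Your proposal is correct and follows essentially the same route as the paper's proof: the first inequality by monotonicity of expectation applied to Lemma \ref{lem:dineq}, and the second by passing to the squared norm, writing it as the quadratic form $\varepsilon_n^T(\frac{1}{n}\mathbf{1}\mathbf{1}^T+\Pi)\varepsilon_n$, using the trace identity with $\mathbf{E}[\varepsilon_n\varepsilon_n^T]=\sigma^2 I$ and $\tr(\Pi)=p$, and finishing with Jensen's inequality. Your write-up is in fact slightly more explicit than the paper's (e.g., justifying $\tr(\Pi)=p$ via full column rank of $X_n$), but the argument is identical.
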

\begin{proof}
The first inequality follows from Lemma \ref{lem:dineq}. Next note 
\begin{align}
(\mathbf{E} \frac{1}{n} \| (\frac{1}{n} \mathbf{1} \mathbf{1}^T + \Pi)^{1/2} \varepsilon_n \|_2)^2 &\leq \mathbf{E} \frac{1}{n} \| (\frac{1}{n} \mathbf{1} \mathbf{1}^T + \Pi)^{1/2} \varepsilon_n \|_2^2  =  \mathbf{E} \frac{1}{n} \varepsilon_n^T (\frac{1}{n} \mathbf{1} \mathbf{1}^T + \Pi) \varepsilon_n =  \mathbf{tr}[(\frac{1}{n^2} \mathbf{1} \mathbf{1}^T + \frac{1}{n}\Pi) \mathbf{E} \varepsilon_n \varepsilon_n^T] \\ \nonumber 
&= \sigma^2\frac{p+1}{n}.
\end{align}
The first step uses Jensen's inequality, as $(\mathbf{E}z)^2\leq \mathbf{E}z^2$, since $(\cdot)^2$ is convex. Taking the square root gives the result.
\end{proof}

\begin{prop} \label{prop:conc1}
Under Assumption \ref{assm:noise_dist_asubexp}, for some constant $c$ we have
\begin{equation} 
    P\left( d_2(F_n, \hat F_n) \geq \sigma \sqrt{\frac{p+1}{n}} + t \right) \leq 2\exp\left(-\frac{c t^\alpha}{\bar L^\alpha \| \varepsilon_{n1} \|_{\Psi_\alpha}^\alpha }\right)
\end{equation}
where $\bar L = \frac{1}{\sqrt{n}}\| (\frac{1}{n} \mathbf{1} \mathbf{1}^T + \Pi)^{1/2} \| $.
\end{prop}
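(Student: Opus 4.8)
The plan is to apply the convex concentration bound of Theorem \ref{convexconc} to the dominating random variable that already appears in Lemmas \ref{lem:dineq} and \ref{lem:dineq2}. Define
$$ g(\varepsilon_n) := \tfrac{1}{\sqrt n} \big\| (\tfrac{1}{n} \mathbf{1}\mathbf{1}^T + \Pi)^{1/2} \varepsilon_n \big\|_2 . $$
Lemma \ref{lem:dineq} gives the pointwise domination $d_2(F_n,\hat F_n) \le g(\varepsilon_n)$, while Lemma \ref{lem:dineq2} gives the mean bound $\mathbf{E}\, g(\varepsilon_n) \le \sigma\sqrt{(p+1)/n}$. Since $\varepsilon_n$ has iid $\alpha$-sub-exponential components (Assumption \ref{assm:noise_dist_asubexp}), it suffices to verify that $g$ is convex and Lipschitz with the stated constant $\bar L$, apply Theorem \ref{convexconc} to control the deviation of $g$ from its own mean, and then shift the centering from the true mean $\mathbf{E}\,g$ to the explicit surrogate $\sigma\sqrt{(p+1)/n}$.

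First I would check the structural hypotheses on $g$. Writing $M := (\tfrac{1}{n}\mathbf{1}\mathbf{1}^T + \Pi)^{1/2}$, which is well defined since $\tfrac{1}{n}\mathbf{1}\mathbf{1}^T$ and the projection $\Pi$ are both positive semidefinite, $g$ is the composition of the Euclidean norm with the linear map $\varepsilon_n \mapsto \tfrac{1}{\sqrt n} M \varepsilon_n$, hence convex. For the Lipschitz constant, the reverse triangle inequality and submultiplicativity give $|g(\varepsilon) - g(\varepsilon')| \le \tfrac{1}{\sqrt n}\|M(\varepsilon-\varepsilon')\|_2 \le \tfrac{1}{\sqrt n}\|M\|\,\|\varepsilon-\varepsilon'\|_2$, so the Lipschitz constant is exactly $\bar L = \tfrac{1}{\sqrt n}\|M\|$, matching the statement. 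Theorem \ref{convexconc} then yields, for some constant $c$,
$$ P\big( |g(\varepsilon_n) - \mathbf{E}\,g(\varepsilon_n)| > t \big) \le 2\exp\!\Big(-\tfrac{c\, t^\alpha}{\bar L^\alpha \|\varepsilon_{n1}\|_{\Psi_\alpha}^\alpha}\Big). $$

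The final step chains the domination and mean bounds to reach the claimed tail. Using $d_2(F_n,\hat F_n)\le g(\varepsilon_n)$ and then $\mathbf{E}\,g(\varepsilon_n)\le \sigma\sqrt{(p+1)/n}$ together with monotonicity of probability,
$$ P\Big( d_2(F_n,\hat F_n) \ge \sigma\sqrt{\tfrac{p+1}{n}} + t \Big) \le P\Big( g(\varepsilon_n) \ge \mathbf{E}\,g(\varepsilon_n) + t \Big) \le P\big( |g(\varepsilon_n) - \mathbf{E}\,g(\varepsilon_n)| \ge t \big), $$
and bounding the right-hand side by the convex concentration estimate above gives the result.

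I expect the only genuinely delicate point to be this centering shift: Theorem \ref{convexconc} concentrates $g$ about its true (unknown) expectation $\mathbf{E}\,g(\varepsilon_n)$, whereas the proposition is stated about the explicit surrogate $\sigma\sqrt{(p+1)/n}$. This is harmless precisely because $\mathbf{E}\,g(\varepsilon_n)$ lies below the surrogate by Lemma \ref{lem:dineq2}, so enlarging the threshold in the one-sided tail can only decrease the probability; passing from the one-sided to the two-sided event in Theorem \ref{convexconc} costs nothing. No further estimates are needed, since the dimension dependence through $p+1$ has already been absorbed into the mean bound of Lemma \ref{lem:dineq2}.
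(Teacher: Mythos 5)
Your proposal is correct and follows essentially the same route as the paper's own proof: apply the convex concentration bound (Theorem \ref{convexconc}) to the convex, $\bar L$-Lipschitz function $\varepsilon_n \mapsto \tfrac{1}{\sqrt n}\|(\tfrac{1}{n}\mathbf{1}\mathbf{1}^T+\Pi)^{1/2}\varepsilon_n\|_2$, then combine the pointwise domination of Lemma \ref{lem:dineq} with the mean bound of Lemma \ref{lem:dineq2} to shift the centering to $\sigma\sqrt{(p+1)/n}$. The only difference is that you spell out the chaining and centering-shift steps that the paper compresses into ``the result then follows from Lemmas \ref{lem:dineq} and \ref{lem:dineq2}.''
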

\begin{proof}
Since $\frac{1}{\sqrt n} \| (\frac{1}{n} \mathbf{1} \mathbf{1}^T + \Pi)^{1/2} \varepsilon_n \|_2$ is a convex and Lipschitz function of $\varepsilon_n$ with Lipschitz constant $L = \frac{1}{\sqrt{n}}\| (\frac{1}{n} \mathbf{1} \mathbf{1}^T + \Pi)^{1/2} \| $, we have from Theorem \ref{convexconc} that
\begin{align} \nonumber
    P\left(\frac{1}{\sqrt n} \| (\frac{1}{n} \mathbf{1} \mathbf{1}^T + \Pi)^{1/2} \varepsilon_n \|_2 \geq \mathbf{E}\frac{1}{\sqrt n} \| (\frac{1}{n} \mathbf{1} \mathbf{1}^T + \Pi)^{1/2} \varepsilon_n \|_2 + t \right) \leq 2\exp\left(-\frac{c t^\alpha}{L^\alpha \| \varepsilon_{n1} \|_{\Psi_\alpha}^\alpha }\right).
\end{align}
The result then follows from Lemmas \ref{lem:dineq} and \ref{lem:dineq2}.
\end{proof}

Now we are ready to put the bounds together. We will utilize the following Lemma.
\begin{lem}
\label{lem:additive_prob}
Consider the random variables $a, b, c: \Omega \rightarrow \R$. Suppose, for all $\omega \in \Omega$, $a(\omega) \leq b(\omega) + c(\omega)$,  Then, for any $\eps_a  = \eps_b + \eps_c$
\begin{align}
\label{eq:additive_prob}
P( a(\omega) \geq \eps_a) \leq P( b(\omega) \geq \eps_b) + P(c(\omega) \geq \eps_c).
\end{align} 
\end{lem}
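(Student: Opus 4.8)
The plan is to prove this by an event-inclusion argument followed by the union bound. The key observation is that the pointwise inequality $a \leq b + c$, together with the additive splitting $\eps_a = \eps_b + \eps_c$, forces the event $\{a \geq \eps_a\}$ to be contained in the union $\{b \geq \eps_b\} \cup \{c \geq \eps_c\}$. Once that set inclusion is established, monotonicity and subadditivity of the probability measure finish the argument immediately.

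First I would establish the inclusion via its contrapositive. Fix any $\oo \in \Omega$ and suppose that $\oo$ lies in neither event on the right, i.e. $b(\oo) < \eps_b$ and $c(\oo) < \eps_c$. Adding these strict inequalities gives $b(\oo) + c(\oo) < \eps_b + \eps_c = \eps_a$, and since $a(\oo) \leq b(\oo) + c(\oo)$ by hypothesis, we conclude $a(\oo) < \eps_a$. Taking the contrapositive, whenever $a(\oo) \geq \eps_a$ we must have $b(\oo) \geq \eps_b$ or $c(\oo) \geq \eps_c$, which is exactly the claimed inclusion
\begin{equation} \nonumber
\{ \oo : a(\oo) \geq \eps_a \} \subseteq \{ \oo : b(\oo) \geq \eps_b \} \cup \{ \oo : c(\oo) \geq \eps_c \}.
\end{equation}

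Then I would apply monotonicity of $P$ to the inclusion and the union bound to the right-hand side, yielding
\begin{equation} \nonumber
P(a \geq \eps_a) \leq P(\{ b \geq \eps_b \} \cup \{ c \geq \eps_c \}) \leq P(b \geq \eps_b) + P(c \geq \eps_c),
\end{equation}
which is the desired conclusion \eqref{eq:additive_prob}. There is essentially no obstacle here: the result is an elementary consequence of subadditivity, and the only point requiring any care is that the splitting must be additive ($\eps_a = \eps_b + \eps_c$) so that the strict inequalities combine correctly. This lemma is the bookkeeping device that lets us combine the three concentration bounds $\epsilon_1, \epsilon_2, \epsilon_3$ (applied to the three distance terms arising from the triangle inequalities \eqref{eq:tri1} and \eqref{eq:tri2}) into the single confidence level $\delta$ claimed in Theorem \ref{thm:mainconcbound}, by choosing each sub-event to have probability at most $\delta/3$.
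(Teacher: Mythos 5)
Your proof is correct and follows essentially the same route as the paper's: both establish that the event $\{a \geq \eps_a\}$ forces $\{b \geq \eps_b\}$ or $\{c \geq \eps_c\}$ (the paper states this implication via the intermediate event $\{b+c \geq \eps_b + \eps_c\}$; your contrapositive argument is simply the explicit justification of that step) and then conclude by subadditivity of $P$. No gaps; the argument is complete.
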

\begin{proof}
For simplicity in notation, we drop the dependence on $\omega$ below. Since $a\leq b + c$ we have 
\begin{align}
\label{eq:part1}
P(a \geq \eps_b + \eps_c) \leq P( b + c \geq \eps_b + \eps_c)
\end{align}
Furthermore, the event $b+c \geq \eps_b + \eps_c$ implies the event $ b \geq \eps_b $ or the event $c \geq \eps_c$. Hence, 
\begin{align}
\label{eq:part2}
&P(b+c \geq \eps_b + \eps_c)\leq P(b \geq \eps_b) + P(c \geq \eps_c).
\end{align}
Putting  \eqref{eq:part1}, \eqref{eq:part2} we get the desired result. 
\end{proof}

\noindent \textit{Proof of Theorem \ref{thm:mainconcbound}}.
By now we have obtained concentration bounds for each term in the upper bounds from the inequalities \eqref{eq:tri1}, \eqref{eq:bound1}, and \eqref{eq:tri2} from Theorems \ref{thm:wassconc1} and \ref{thm:wassconc2} and Proposition \ref{prop:conc1}. Combining these inequalities and applying Lemma \ref{lem:additive_prob} twice yields
\begin{align} \nonumber
    P( d_1(\Phi(F), \Phi_k^*(\hat F_n))  \geq \epsilon) \leq P ( d_1(\Phi(\hat F_n), \Phi_k^*(\hat F_n)) \geq \epsilon_1)
    + P( \sqrt n L d_2(F_n, \hat F_n) \geq \epsilon_2 ) + P( \sqrt n L d_2(F, F_n) \geq \epsilon_3 )
\end{align}
with $\epsilon = \epsilon_1 + \epsilon_2 + \epsilon_3$. Equating the right-hand side to $\delta$, allocating $\delta/3$ to each term, and using the bounds from Theorems \ref{thm:wassconc1} and \ref{thm:wassconc2} and Proposition \ref{prop:conc1}, we can invert each bound to solve for the corresponding radii $\epsilon_1, \epsilon_2, \epsilon_3$, and the result is obtained.

\end{document}